\theoremstyle{plain}
\newtheorem{theorem}{Theorem}[section]
\newtheorem{proposition}[theorem]{Proposition}
\newtheorem{lemma}[theorem]{Lemma}
\newtheorem{corollary}[theorem]{Corollary}
\theoremstyle{definition}
\newtheorem{definition}[theorem]{Definition}
\newtheorem{example}[theorem]{Example}
\theoremstyle{remark}
\numberwithin{equation}{section} \makeatletter
\let\c@equation\c@theorem
\def\paritem#1{%
  \smallskip
  \setbox0=\hbox{#1\enspace}
  \par\noindent
  \ifnum\wd0>\parindent\box0
  \else\hbox to\parindent{\box0\hfill}\fi\ignorespaces}
\DeclareMathOperator\St{St}
\DeclareMathOperator\st{st}
\def\epsilon{\varepsilon}
\begin{document}
\title[Lattice Gauge Field Theory \ldots] {Lattice Gauge Field
Theory and Prismatic Sets}
\author[B.Akyar ]{B.Akyar}
\address{Department of Mathematics   \\
Dokuz Eyl\"ul University \\
TR-35160 Izmir, Turkey} \email[B.~Akyar]{bedia.akyar@deu.edu.tr}
\author[J.~L.~Dupont]{J. ~L. ~Dupont}
\address{Department of Mathematics \\
University of Aarhus \\
DK-8000 {\AA}rhus C, Denmark} \email[J.~L.~Dupont]{dupont@imf.au.dk}

\keywords{Simplicial set, Chern--Simons class, Prism complex,
Classifying Space, Subdivision.}
\date{\today}

\begin{abstract}
We study prismatic sets analogously to simplicial sets except that realization
involves prisms, i.e., products of simplices rather than just simplices.
Particular examples are the prismatic subdivision of a simplicial set $S$ and
the prismatic star of $S$. Both have the same homotopy type as $S$ and in
particular the latter we use to study lattice gauge theory in the sense of Phillips
and Stone. Thus for a Lie group $G$ and a set of parallel transport functions defining
the transition over faces of the simplices, we define a classifying map from the
 prismatic star to a prismatic version of the classifying space of $G$. In turn this
defines a $G$-bundle over the prismatic star.

\end{abstract}

\maketitle

\tableofcontents


\medbreak
\section{\bf{Introduction}\label{one}}

In the study of global properties of locally trivial fibre bundles
it is a fundamental difficulty that the usual combinatorial methods
of algebraic topology depends on the use of simplicial complexes
which structure behaves badly with respect to local trivializations.
By a theorem of Johnson \cite{J}, the base and total space of a
locally trivial smooth fibre bundle with projection $\pi:E\to{B}$
can be triangulated in such a way that $\pi$ is a simplicial map.
But obviously even in this case a general fibre is not a simplicial
complex in any natural way. However such a fibre has a natural
decomposition into prisms, i.e., products of simplices, and the
whole triangulated bundle gives the basic example of a prismatic
set, analogous to the notion of a simplicial set derived from a
simplicial complex. Prismatic sets were introduced and used by the
second author and R. Ljungmann in \cite{DLj} (see also Ljungmann's thesis \cite{Lj})
in order to construct an explicit fibre integration map in smooth
Deligne cohomology, see also \cite{DK}. But the important special case of the prismatic
subdivision of a simplicial set was used in Akyar \cite{B} in connection
with ``Lattice Gauge Theory'' in the sense of Phillips and Stone
\cite{PS}, \cite{PS1}, \cite{PS5} and similar constructions have been used in other
connections, see e.g. \cite{MS}. One can see \cite{L} for further information about Lattice Gauge Fields.

In this paper we shall give a more systematic treatment of prismatic
sets and their properties but we shall concentrate on the
applications to lattice gauge theory extending the work of Phillips
and Stone to arbitrary simplicial sets and all dimensions. For an 
arbitrary simplicial set $S$ and a given
Lie group $G$ together with a set of parallel transport functions in their
sense, we construct a prismatic
set $\bar{P}(S)$ of the same homotopy type as $S$ and a classifying map
from $\bar{P}(S)$ to a prismatic version of the standard model for
$BG$. This is one of our main results (Theorem 8.1). Geometrically, for $S$ a simplicial complex,
 $\bar{P}(S)$ is closely related to the nerve of the covering by stars of vertices (Theorem 5.1). In turn this gives a principal
$G$-bundle with a connection and thus in principle gives rise via the usual Chern-Weil and Chern-Simons theory
to explicit formulas for characteristic classes (Corollary 8.2). We shall return to this elsewhere. One can see \cite{CrS}, \cite{CnS}, \cite{D},
\cite{F3}, \cite{W} for further information about Chern-Simons Theory.

The paper is organized as follows:

In chapter 2, prismatic sets are defined and 
their various geometric realizations are studied.

The third chapter introduces the prismatic triangulation of a simplicial
map and in particular of a simplicial set. Furthermore,
we comment on the calculation of the homology of the geometric
realization of a prismatic set.

In chapter 4 we study prismatic sets associated to stars of simplicial complexes.
It turns out that the prismatic set $\bar{P}_.(S)$ given in this
chapter in the case of a simplicial complex is the nerve of the covering by stars of vertices. 

In the fifth chapter, we compare the two star simplicial sets and prove
that there is a natural surjective map
$\bar{p}:\bar{P}_.(S)\to{P\St{S}}$. It turns out that this map is an
isomorphism for $S=K^s$, where $K$ is a simplicial complex.

In chapter 6, we introduce a prismatic version of the classifying space. This is done by replacing the Lie group $G$ by the
singular simplicial set of continuous maps $\text{Map}(\Delta^q,G)$.

In chapter 7, we introduce the notion of  ``compatible transition functions'' similar to the 
``parallel transport functions'' of Phillips-Stone \cite{PS1} for a simplicial complex $K$. 
We show how a given bundle on the realization of a simplicial set and socalled ``admissible trivializations''
give rise to a set of compatible transition functions and vice versa. We end the chapter with a remark on the relation between 
the compatible transition functions and parallel transport
along a piecewise linear path.

Finally in the last chapter we construct the classifying map for a given set of compatible transition functions. For this we
construct a prismatic map from $\bar{P}(S)$ to the prismatic model for the classifying space constructed in chapter 6.

\textbf{Acknowledgements:} We would like to thank Marcel B\"{o}kstedt for his interests and comments during the preparation of this paper.

 \newpage

\medbreak
\section{\bf{Prismatic Sets}\label{two}}

Prismatic sets are similar to simplicial sets but they are realized by using prisms instead of only simplices.

Let $\Delta^p=\{(t_0,\dots,t_p)\in{{\Bbb{R}}^{p+1}}\text{ }|\text{
}\sum_i t_i=1,t_i\leq{1}\}$ be a standard $p$-simplex given with
barycentric coordinates. A prism is a product of simplices, that is,
a set of the form $\Delta^{q_0...q_p}=\Delta^{q_0}\times{\dots}\times{\Delta^{q_p}}$.

The motivating example is  triangulated fibre bundles:

\begin{example}
\label{2.1}

Given a smooth fibre bundle $\pi:Y\to{Z}$ with
dim$Y=m+n$, dim$Z=m$ and compact fibres possibly with boundary. By
a theorem of Johnson \cite{J}, there are smooth triangulations $K$ and $L$
of $Y$ and $Z$, respectively and a simplicial map $\pi
^\prime:K\to{L}$ in the following commutative diagram

\begin{displaymath}
\xymatrix{ |K| \ar[d]^{|\pi^{\prime}|} \ar[r]^{\approx}
& Y  \ar[d]^\pi \\
|L| \ar[r]^{\approx} & Z }
\end{displaymath}
and the horizontal maps are homeomorphisms which are smooth on each
simplex, here $|K|=\bigcup_{\tau\in{K_k}}\Delta^k\times{\tau}/\sim$,
$k=0,...,\text{dim}K$, is the geometric realization. \vspace{0.5 cm}

One can extend a given such  triangulation of $\partial Y\to{Z}$
to a triangulation of $Y\to{Z}$.

\begin{figure}[htbp]
  \includegraphics[width=9cm]{figs.1}\\
\end{figure}

A simplex $\tau$ in $K$ has vertices
$\tau=(b_0^0,\dots,b_{q_0}^0|...|b_0^p,\dots,b_{q_p}^p)$ with
$\sigma=(a_0,\dots,a_p)$ such that $\pi^\prime(b_j^i)=a_i$. Here, we give the set of vertices
of the total space, the lexicographical order. So geometrically, for an open simplex $\stackrel{\circ}{\sigma}$ in $L$, we
have
\begin{displaymath}
\pi^{-1}(\lvert\stackrel{\circ}{\sigma}\rvert)\approx{\lvert\stackrel{\circ}{\sigma}\rvert}\times{\bigcup_{\tau\in{\pi^{-1}(\sigma)}}}\Delta^{q_0...q_p}\times{\tau}.
\end{displaymath}

\end{example}

We collect all these in the formal definition below using simplicial sets. For these we recall the notation but refer otherwise to Mac Lane \cite{McL}, May \cite{May}.

\begin{definition}
  \label{2.2}
A simplicial set $S_.=\{S_q\}$ is a sequence of sets with face
operators $d_i:S_q\to{S_{q-1}}$ and degeneracy operators
$s_i:S_q\to{S_{q+1}}$, $i=0,...,q$, satisfying the following
identities:
\begin{eqnarray*}
d_id_j= \left\{ \begin{array}{r@{\quad: \quad}l}d_{j-1}d_i & i<j  \\
d_jd_{i+1} & i\geq{j}, \end{array} \right.
\end{eqnarray*}
\begin{eqnarray*}
s_is_j= \left\{ \begin{array}{r@{\quad: \quad}l}s_{j+1} s_i & i\leq{j}  \\
s_j s_{i-1} & i>j, \end{array} \right.
\end{eqnarray*}
and
\begin{displaymath}
d_is_j= \left\{ \begin{array}{r@{\quad:
\quad}l}s_{j-1}d_i & i<j \\ \text{id} & i=j,i=j+1
\\ s_jd_{i-1} & i>j+1.  \end{array} \right.
\end{displaymath}
\end{definition}

\begin{example}
\label{2.3}
A simplicial complex $K_.$ gives a simplicial set where
\linebreak
$K_p=\{(a_{i_0},...,a_{i_p})\text{   }|\text{
}\text{some nondecreasing sequences for a given linear ordering of}\text{   }
K_0\}$ is the set of $p$-simplices.

\end{example}

\begin{example}
\label{2.4}
 Given an open cover $\mathcal{U}={U_i}$ of $Z$ we have the nerve \linebreak
$N\mathcal{U}=\{N\mathcal{U}(p)\}$ of the covering, where
\begin{displaymath}
N\mathcal{U}(p)=\bigsqcup_{i_0,...,i_p}U_{i_0}\cap{...}\cap{U_{i_p}},
\end{displaymath}
and $(i_0,...,i_p)$ is nondecreasing for a given linear order of the index set.

Let us denote $U_{i_0}\cap{...}\cap{U_{i_p}}$ by $U_{i_0,...,i_p}$. $N\mathcal{U}$ is a simplicial manifold, where the face and degeneracy maps come from the followings

\begin{eqnarray*}
d_j:U_{i_0,...,i_p}&\to&{U_{i_0,...,{\hat{i}}_j,...,i_p}}\\
s_j:U_{i_0,...,i_p}&\to&{U_{i_0,...,i_j,i_j,...,i_p}}
\end{eqnarray*}
That is, $N\mathcal{U}(p)$ is a smooth manifold for each $p$ and the face and degeneracy maps are smooth.
There is also a corresponding simplicial set $N_d\mathcal{U}=\{N_d\mathcal{U}(p)\}$ called the discrete nerve of the covering. Here
$N_d\mathcal{U}(p)$ is simply the set consisting of an element for each non-empty intersection of $p+1$ open sets from $\mathcal{U}$. So there is
a natural forgetful map $N\mathcal{U}\to{N_d\mathcal{U}}$.

\end{example}
{\bf Note:} If $S_.$ has only face operators, then it is called a
$\Delta$-set.

\begin{definition}
  \label{2.5}

Given $p\geq{0}$, a $(p+1)$-multi-simplicial set is a sequence
$\{S_{q_0,...,q_p}\}$ which is a simplicial set in each variable
$q_i$, $i=0,...,p$.
\end{definition}

\begin{definition}
  \label{2.6}
A prismatic set $P=\{P_{p,.}\}$ is a sequence
$P_{p,.}=\{P_{p,q_0,...,q_p}\}$ of $(p+1)$-multi-simplicial sets,
i.e., with face and degeneracy operators

\begin{eqnarray*}
d_j^i:P_{p,q_0,...,q_p}&\to&{P_{p,q_0,...,q_i-1,...,q_p}}\\
s_j^i:P_{p,q_0,...,q_p}&\to&{P_{p,q_0,...,q_i+1,...,q_p}}
\end{eqnarray*}
such that $d_j^i$, $s_j^i$ commute with $d_l^k$, $s_l^k$ for
$i\neq{k}$, and such that  $d_j^i$, $s_j^i$ for fixed $i$ satisfy the identities in Definition \ref{2.2}.

Furthermore there are face operators
\begin{displaymath}
d_k:P_{p,q_0,...,q_p}\to{P_{p-1,q_0,...,\hat{q}_k,...,q_p}}
\end{displaymath}
commuting with $d_j^i$ and $s_j^i$ (interpreting  $d_j^k=s_j^k=\text{id}$ on the right)
such that $\{P_{p,.}\}$ is a $\Delta$-set.

\end{definition}

\begin{definition}
  \label{2.7}
If similarly $P$ has degeneracy operators
\begin{displaymath}
s_k:P_{p,q_0,...,q_p}\to{P_{p+1,q_0,...,q_k,q_k,...,q_p}}
\end{displaymath}
then $P$ is called a strong prismatic set.
\end{definition}
{\bf Remark:} In this case $(P_p,d_k,s_k)$ is a usual simplicial set.

In general, degeneracy operators given in Definition \ref{2.7} do not exist naturally so in this case $(P_{p,.},d_k)$ is only a $\Delta$-set.

Example \ref{2.1} continued: A triangulated fibre bundle
\begin{displaymath}
\pi:|K|\to{|L|}
\end{displaymath}
gives a strong prismatic set $P_.(K/L)$
by letting
\begin{displaymath}
P_p(K/L)_{q_0...q_p}\subseteq{K_{p+q_0+...+q_p}\times{L_p}}
\end{displaymath}
be the subset of pairs of simplices $(\tau,\sigma)$ so that $q_i+1$
of the vertices in $\tau$ lies over the $i$-th vertex in $\sigma$.
Then we have face and degeneracy operators defined in the obvious
way. It is now straight forward to check that this is a strong
prismatic set.

\begin{example}
\label{ES}
For a given simplicial set $S$, consider
$E_pS=\underbrace{S_.\times{...}\times{S_.}}_{p+1-times}$.\linebreak
$\pi_i:E_pS\to{E_{p-1}S}$ is the projection which deletes the
$i$-th factor. Similarly, the diagonal map
$\delta_i:E_pS\to{E_{p+1}}S$ repeats the $i$-th factor. This is a strong prismatic set.

\end{example}

Prismatic sets have various geometric realizations.

\begin{definition}
  \label{2.9}
First, we have for each $p$ the thin (geometric) realization
\begin{eqnarray}
\label{2.10}
|P_{p,.}|=\bigsqcup_{q_0,...,q_p}\Delta^{q_0...q_p}\times{P_{p,q_0,...,q_p}}/\sim
\end{eqnarray}
with equivalence relation ``$\sim$'' generated by the face and degeneracy maps
\begin{eqnarray*}
\varepsilon_j^i&:&\Delta^{q_0...q_i...q_p}\to{\Delta^{q_0...q_i+1...q_p}}\text{   }\text{   }\text{and}\\
\eta_j^i&:&\Delta^{q_0...q_i...q_p}\to{\Delta^{q_0...q_i-1...q_p}},
\end{eqnarray*}
respectively.
$\{|P_{p,.}|\}$ is a $\Delta$-space hence it
gives a fat realization
\begin{eqnarray}
\label{2.10*} \|\text{   }|P_.|\text{
}\|=\bigsqcup_{p\geq{0}}\Delta^p\times{|P_{p,.}|}/\sim
\end{eqnarray}
by only using face operators $d_k$.

The face and degeneracy operators
$d_k$, $s_k$ act on $\Delta^{q_0...q_p}$ as the projection and the
diagonal, respectively so they induce a structure of a simplicial
set on $|P_p|$. In other words, the projection
$\pi_i:\Delta^{q_0...q_p}\to{\Delta^{q_0...{\hat{q}}_i...q_p}}$
deletes the $i$-th coordinate and the diagonal map
$\Delta_i:\Delta^{q_0...q_p}\to{\Delta^{q_0...q_iq_i...q_p}}$
repeats the $i$-th factor. Then the further equivalence relation on $|P_.|$ given in (\ref{2.10*}) is generated by
\begin{eqnarray*}
(\varepsilon^it,s,\sigma)\sim(t,\pi_is,d_i\sigma),\text{   }t\in{\Delta^{p-1}},\text{   }s\in{\Delta^{q_0...q_p}},\text{   }\sigma\in{P_{p,q_0,...,q_p}}.\\
\end{eqnarray*}

If $P_.$ is strong then we also have a thin realization
\begin{displaymath}
|P_.|=\||P_.|\|/\sim
\end{displaymath}
given by the above and the further relation
\begin{eqnarray*}
(\eta^it,s,\sigma)\sim(t,\Delta_is,s_i\sigma),\text{
}t\in{\Delta^{p+1}},\text{   }s\in{\Delta^{q_0,...,q_p}},\text{
}\sigma\in{P_{p,q_0,...,q_p}}.
\end{eqnarray*}

Similarly, we can define for each $p$, the fat
realization $||P_.||$, that is
\begin{displaymath}
\|P_p\|=\bigsqcup_{q_0,...,q_p}\Delta^{q_0...q_p}\times{P_{p,q_0,...,q_p}}/\sim
\end{displaymath}
with equivalence relation given by only the face maps
$d_j^i$.

Moreover, we have the very fat realization
\begin{displaymath}
\|\text{   }\|P_.\|\text{
}\|=\bigsqcup_{p\geq{0}}\Delta^p\times{\|P_{p,.}\|}/\sim
\end{displaymath}
using only face operators.

For a given simplicial set $S$ and $E_pS$  as in Example \ref{ES} we have
$\|\text{   }|E_.S|\text{   }\|$ as the fat realization of the space which maps
 $p$-th term to $\underbrace{|S_.|\times{...}\times{|S_.|}}_{p+1-times}$.

Define for a space $X$, $E_pX=\underbrace{X_.\times{...}\times{X_.}}_{p+1-times}$
Let us say $X=|S|$ then $\|\text{   }|E_.S|\text{   }\|$ is contractible.

\end{definition}

\newpage

\medbreak
\section{\bf{Prismatic Triangulation}\label{three}}

Let us return to the case of a triangulated fibre bundle $|K|\to{|L|}$.
In this case the natural map
\begin{displaymath}
P_p(K/L)_{q_0,...q_p}\to{K_{q_0+...+q_p+p}}
\end{displaymath}
induces a homeomorphism

\begin{displaymath}
\xymatrix{ |P_.(K/L)| \ar[d]^{|\pi^{\prime}|} \ar[r]^{\phantom{123}\approx}
&|K|  \ar[d]^\pi \\
|L| \ar[r]^{=} & |L| }
\end{displaymath}
In this diagram, the top horizontal map we shall call the prismatic triangulation
homeomorphism
\begin{displaymath}
\lambda:|P_.(K/L)|\stackrel{\cong}\to{|K|}
\end{displaymath}
induced by
\begin{eqnarray}
\label{3.1}
\lambda(t,s^0,...,s^p,(\tau,\sigma))=(t_0s^0,...,t_ps^p,\tau)\in{\Delta^{p+q}\times{K_{p+q}}},
\end{eqnarray}
 where $(t,s,\tau,\sigma)\in{\Delta^p}\times{\Delta^{q_0...q_p}}\times{P_p(K/L)_{q_0...q_p}}$ and $q=q_0+...+q_p$.

{\bf Note:} If $\stackrel{\circ}{\sigma}$ is an open $p$-simplex in
$L$ then $\lambda$ provides a natural trivialization of
$|K|_\sigma=\pi^{-1}(\stackrel{\circ}{\sigma})$, that is, a
homeomorphism
\begin{displaymath}
\lambda:\stackrel{\circ}{\sigma}\times{|P_p(K/\sigma)}|\stackrel{\approx}\to{|K|_\sigma}.
\end{displaymath}

We can generalize this construction to any simplicial map:

\begin{example}
\label{3.3}
{\it Prismatic triangulation of a simplicial map.}
Let $f:S_.\to{\bar{S_.}}$ be a simplicial
map of simplicial sets and define $P_.(f)$
by
\begin{displaymath}
P_p(f)_{q_0,...,q_p}=\{(\sigma,\bar{\sigma})\in{S_{q_0+...+q_p+p}}\times{{\bar{S}}_p}\text{
  }|\text{   }
f(\sigma)=\mu_{q_0,...,q_p}(\bar{\sigma})\}
\end{displaymath}
where the corresponding map
\begin{displaymath}
 \mu^{q_0,...,q_p}:\Delta^{q_0+...+q_p+p}\to{\Delta^p}
\end{displaymath}
 is given by
\begin{displaymath}
\{0,...,q_0|...|q_0+...+q_{p-1}+p,...,q_0+...+q_p+p\}\to{\{0,...,p\}}.
\end{displaymath}
By this, we mean that the basis vectors $e_0,...,e_{q_0}$ are mapped to
$e_0$, and $e_{q_0+1},...,e_{q_0+q_1+1}$ are mapped to $e_1$ and etc.
Explicitly
\begin{displaymath}
\mu_{q_0,...,q_p}=\hat{s}_{q+p}\circ{s_{(q_0+...+q_p+p-1)...(q_0+...+q_{p-1}+p)}}\circ{...}\circ{\hat{s}_{q_0}}\circ{s_{(q_0-1)...(0)}},
\end{displaymath}
where the $\hat{s}_i$ are left out and
\begin{displaymath}
s_{(q_0+...+q_i+i-1)...(q_0+...+q_{i-1}+i)}=s_{q_0+...+q_i+i-1}\circ{...}\circ{s_{q_0+...+q_{i-1}+i}},
\end{displaymath}
$i=0,...,p$. The boundary maps in the fibre direction
\begin{displaymath}
d_j^i:P_p(f)_{q_0,...,q_p}\to{P_p(f)_{q_0,...,q_i-1,...,q_p}}
\end{displaymath}
are inherited from the face operators defined on $S_{q+p}$. Thus
\begin{displaymath}
d_j^i(\sigma,\bar{\sigma})=(d_{q_0+...+q_{i-1}+i+j-1}\sigma,\bar{\sigma}).
\end{displaymath}
Similarly the degeneracy maps $s^i_j$ on $P_p(f)_{q_0,...,q_p}$
\begin{displaymath}
s^i_j:P_p(f)_{q_0,...,q_p}\to{P_p(f)_{q_0,...,q_i+1,...,q_p}}
\end{displaymath}
are inherited from the ones on $S_{q+p}$. That is,
\begin{displaymath}
s_j^i(\sigma,\bar{\sigma})=(s_{q_0+...+q_{i-1}+i+j-1}\sigma,\bar{\sigma}).
\end{displaymath}
The boundary maps
\begin{displaymath}
d^i:P_p(f)_{q_0,...,q_p}\to{P_{p-1}(f)_{q_0,...,{\hat{q}}_i,...,q_p}}
\end{displaymath}
are determined by the boundary maps defined on both $S_{q+p}$ and
${\bar{S}}_p$. Thus
\begin{displaymath}
d^i(\sigma,\bar{\sigma})=(d_{q_0+...+q_{i-1}+i-1}\circ{...}\circ{d_{q_0+...+q_i+i-1}}\sigma,d_i\bar{\sigma}),
\end{displaymath}
here the composition of the face operators can be shortly written as
\begin{displaymath}
d_{(q_0+...+q_{i-1}+i-1)...({q_0+...+q_i+i-1})}=d_{q_0+...+q_{i-1}+i-1}\circ{...}\circ{d_{q_0+...+q_i+i-1}}.
\end{displaymath}
\end{example}

{\bf Note:} $P_.(f)$ is a prismatic set, but in general not a strong one.

\begin{theorem}
  \label{3.4}
There is a pullback diagram

\begin{displaymath}
\xymatrix{ \|\text{   }|P_.(f)|\text{   }\| \ar[d]^{\|f\|}
\ar[r]^{\phantom{12}\lambda}
&|S_.|  \ar[d]^{|f|} \\
\|\bar{S}_.\| \ar[r]^q & |\bar{S}_.| }
\end{displaymath}

In particular $\lambda$ is a homotopy equivalence.

\end{theorem}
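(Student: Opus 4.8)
The strategy is to identify $\|\,|P_.(f)|\,\|$ with the fibre product $|S_.|\times_{|\bar S_.|}\|\bar S_.\|$, with $\lambda$ and $\|f\|$ the two projections, and then to conclude that $\lambda$ is a homotopy equivalence because it is the base change of $q$, which is one. First I would pin down the four maps. Here $q\colon\|\bar S_.\|\to|\bar S_.|$ is the canonical projection from the fat to the thin realisation and $|f|$ is the geometric realisation of $f$; both are continuous in the usual way. On representatives, $\|f\|$ sends the class of $(t,s^0,\dots,s^p,(\sigma,\bar\sigma))$ to the class of $(t,\bar\sigma)$, and $\lambda$ sends it to the class of $\bigl((t_0s^0,\dots,t_ps^p),\sigma\bigr)$ in $|S_.|$, generalising \eqref{3.1}. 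Checking that $\lambda$ and $\|f\|$ are well defined, i.e.\ compatible with the relations in Definition \ref{2.9}, is where the bookkeeping of Example \ref{3.3} enters: the formulas there for $d^i$, $d^i_j$, $s^i_j$ on $P_p(f)$ are precisely the face and degeneracy operators on $S_{q_0+\dots+q_p+p}$ that correspond, under $\lambda$, to inserting a block of zero coordinates, inserting a single zero coordinate, or repeating a coordinate in $|S_.|$; this is tedious but routine. Commutativity of the square then reduces to the identity $\mu^{q_0,\dots,q_p}_*(t_0s^0,\dots,t_ps^p)=t$ in $\Delta^p$, which holds because $\mu^{q_0,\dots,q_p}_*$ sums the coordinates of each block $t_is^i$ while $\sum_j s^i_j=1$; combined with the defining relation $f(\sigma)=\mu_{q_0,\dots,q_p}(\bar\sigma)$ this gives $|f|\circ\lambda=q\circ\|f\|$.

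Next I would show the square is a pullback by exhibiting a two-sided inverse to the induced map $\Phi\colon\|\,|P_.(f)|\,\|\to|S_.|\times_{|\bar S_.|}\|\bar S_.\|$. The relevant normal forms are: $\|\bar S_.\|$ is a CW complex with one $p$-cell for each element of $\bar S_p$, so every point has a unique representative $(t,\bar\sigma)$ with $t$ in the open simplex, while in the thin realisations one has the Eilenberg--Zilber normal form. Given $(x,y)$ in the fibre product with $y=[t,\bar\sigma]$ ($t$ interior, $\bar\sigma\in\bar S_p$) and $x=[u,\sigma]$ ($\sigma$ nondegenerate, $u$ interior), the equation $|f|(x)=q(y)$ forces $f(\sigma)$ and $\bar\sigma$ to have the same nondegenerate part, with the associated barycentric points matching under the corresponding affine surjections; cutting the simplex carrying $\sigma$ along the fibres of the affine surjection onto $\Delta^p$ determined by this data produces a prism $\Delta^{q_0\cdots q_p}$, a multidegree $(q_0,\dots,q_p)$, barycentric points $s^i\in\Delta^{q_i}$ with $u$ the concatenation of the $t_is^i$, and an element of $P_p(f)_{q_0,\dots,q_p}$ lying over $\bar\sigma$; this is the required inverse image, and one then checks that $\Phi$ is bijective and a homeomorphism cell by cell. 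I expect this combinatorial step --- turning the Eilenberg--Zilber data of $f(\sigma)$ over $\bar\sigma$ into an honest prism together with the matching barycentric decomposition --- to be the main obstacle. It is, however, exactly the ``cutting a simplex over a simplex into prisms'' that underlies the prismatic triangulation homeomorphism \eqref{3.1} of Example \ref{2.1}, so an alternative is to prove the theorem first for $\bar S_.$ the standard simplex (where it reduces to that homeomorphism applied to the triangulated map $|S_.|\to\Delta^p$) and then assemble the general case by gluing over the cells of $\|\bar S_.\|$.

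Finally, having identified $\lambda$ with the base change of $q$ along $|f|$, I would deduce that $\lambda$ is a homotopy equivalence from the fact that $q$ is; the latter is the standard comparison between the fat and thin realisations of a simplicial set. To see that the base change survives, filter $|\bar S_.|$ by simplicial skeleta: passing from the $(n-1)$- to the $n$-stage attaches, for each nondegenerate $\bar\tau\in\bar S_n$, a copy of $\Delta^n$ to $|\bar S_.|$ and, to $\|\bar S_.\|$, the collapsible union of the prism cells indexed by the degenerate iterates of $\bar\tau$, over which $q$ is a homotopy equivalence relative to the boundary; pulling this picture back along $|f|$ and applying the gluing lemma for homotopy equivalences stage by stage shows that $\lambda$ is a homotopy equivalence. (One can also phrase the last step by noting that over the interior of each cell of $|\bar S_.|$ the square is a product up to homeomorphism, so it is a homotopy pullback, and a homotopy pullback of a homotopy equivalence is a homotopy equivalence.)
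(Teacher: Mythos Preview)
Your proposal is correct and follows essentially the same route as the paper: define $\lambda$ by the formula $(t,s,\sigma,\bar\sigma)\mapsto(t_0s^0,\dots,t_ps^p,\sigma)$, check commutativity via $\mu^{q_0,\dots,q_p}$, factor through the pullback, and show the comparison map $\Lambda$ is a homeomorphism by working over the cells of $\|\bar S_.\|$. The paper is terser---it simply invokes the local prismatic triangulation homeomorphism over each open simplex of $\|\bar S_.\|$ and inducts over skeleta, whereas you spell out both a direct inverse via Eilenberg--Zilber normal forms and (as your alternative) that same skeletal gluing; your final paragraph also makes explicit the step, left implicit in the paper, of why the base change of $q$ remains a homotopy equivalence.
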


{\bf Proof:} The map
$\lambda:\Delta^p\times{\Delta^{q_0...q_p}}\times{P_p(f)_{q_0...q_p}}\to{\Delta^{q+p}\times{S_{q+p}}}$
is given by
$\lambda(t,s,\sigma,\bar{\sigma})=(t_0s^0,...,t_ps^p,\sigma).$ The
commutativity of the diagram follows from the definition of $P_.(f)$
since
\begin{displaymath}
P_p(f)_{q_0,...,q_p}\subseteq{S_{q+p}\times{{\bar{S}}_p}}
\end{displaymath}
consists of pairs
$(\sigma,\bar{\sigma})\in{S_{q+p}}\times{{\bar{S}}_p}$ such that
$f(\sigma)=\mu_{q_0,...,q_p}(\bar{\sigma})\in{{\bar{S}}_p}$.

By the commutativity of the diagram,
$\lambda$ factors over the pullback
$|S_.|\times_{|\bar{S}_.|}{\|\bar{S}_.\|}$ in the diagram

\begin{displaymath}
\xymatrix{ |S_.|\times_{|\bar{S}_.|}{\|\bar{S}_.\|} \ar[d]^{\|f\|}
\ar[r]^{\phantom{1234}{\text{pr}}_1}
&|S_.|  \ar[d]^{|f|} \\
\|\bar{S}_.\|
\ar[r]^q & |\bar{S}_.| }
\end{displaymath}
Here elements in the pullback
$|S_.|\times_{|\bar{S}|}{\|\bar{S}_.\|}$ are represented by pairs
$((t,\sigma),(\bar{t},\bar{\sigma}))$ such that
$f(\sigma)=\mu_{q_0,...,q_p}(\bar{\sigma})$ and
$\bar{t}=\mu^{q_0,...,q_p}(t)$, where $\sigma\in{S_{q+p}}$,
$\bar{\sigma}\in{{\bar{S}}_q}$. Therefore
$\lambda\times{\|f\|}:\|\text{
  }|P_.(f)|\text{   }\|\to{|S|\times{\|\bar{S}\|}}$
induces $\Lambda$ in the diagram

\begin{displaymath}
\xymatrix{ \|\text{   }|P_.(f)|\text{   }\| \ar[r]^{\Lambda} \ar[d]
& |S_.|\times_{|\bar{S}_.|}{\|\bar{S}_.\|}
\ar[r]^{\phantom{1234}\text{pr}_1} \ar[d]^{{\text{pr}}_2}
& |S_.| \ar[d]^{|f|}\\
\|\bar{S}_.\| \ar[r]^{\text{id}} & \|\bar{S}_.\| \ar[r] &
|\bar{S}_.|}
\end{displaymath}
Now $\Lambda$ is a homotopy equivalence. Indeed, an argument similar
to the note following (\ref{3.1}) gives a homeomorphism of the
preimage $\|f\|$ of an open simplex in $\|\bar{S}_.\|$. Hence
$\Lambda$ is shown to by a homeomorphism by induction over skeleton
of $\|\bar{S}_.\|$.

$\hfill \Box$

\begin{example}
\label{3.5} {\it Prismatic triangulation of a simplicial set}. Let
$S_.$ be a simplicial set and $\bar{S_.}=*$ the simplicial set with
one element in each degree. Here \linebreak $P_p(f)=P_pS$ is called
the $p$-th prismatic subdivision of $S$ and  for each
$t\in{\stackrel{\circ}{\Delta}^p}$ the map
$\lambda_p(t,-):|P_pS|\to{|S_.|}$ is a homeomorphism. In this case,
Theorem \ref{3.4} gives a homeomorphism $\Lambda:\|\text{
  }|P_.S|\text{   }\|\stackrel{\approx}\to{\|*\|}\times{|S_.|}$, here
$\|*\|=\bigcup_n\Delta^n/\partial\Delta^n$. In particular
$\lambda:\|\text{   }|P_.S|\text{   }\|\to{|S_.|}$ is a homotopy
equivalence. We shall call $P_.S$ the prismatic triangulation of
$S$.

For later use, let us give the explicit construction of the
$p+1$-prismatic set $P_.S_.$ and its realization:

\begin{displaymath}
P_pS_{q_0,...,q_p}=S_{q_0+...+q_p+p}.
\end{displaymath}

The face operators
\begin{displaymath}
{d_j}^{(i)}:P_pS_{q_0,...,q_i,...,q_p}=S_{q+p}\to{P_pS_{q_0,...,q_i-1,...,q_p}}=S_{q+p-1}
\end{displaymath}
are defined by
\begin{displaymath}
d_j^{(i)}:=d_{q_0+...+q_{i-1}+i+j},
\end{displaymath}
$j=0,...,q_i$.
Similarly, the degeneracy operators
\begin{displaymath}
{s_j}^{(i)}:P_pS_{q_0,...,q_i,...,q_p}=S_{q+p}\to{P_pS_{q_0,...,q_i+1,...,q_i}}=S_{q+p+1}
\end{displaymath}
can be defined by
\begin{displaymath}
s_j^{(i)}:=s_{q_0+...+q_{i-1}+i+j},
\end{displaymath}
$j=0,...,q_i$.
The face maps
\begin{displaymath}
d_{(i)}:P_pS_{q_0,...,q_p}\to{P_{p-1}S_{q_0,...,{\hat{q}}_i,...q_p}}
\end{displaymath}
are the operators corresponding to
\begin{displaymath}
{\varepsilon}^{(i)}:{\Delta}^{q_0+...+{\hat{q}}_i+...+q_p+p-1}\to{{\Delta}^{q_0+...+...+q_p+p}}
\end{displaymath}
take $(e_0,...,e_{q_0+...+{\hat{q}}_i+...+q_p+p-1})$ to $(e_0,...,e_{q_0+...+q_p+p})$, deleting the elements \linebreak
$q_0+...+q_{i-1}+i,...,q_0+...+q_i+i$.
It deletes $(q_i+1)$- elements.
In contrary to this, there is no degeneracy operator.

Now we turn to the realizations. For the sequences of spaces
$\{|P_.S_.|\}$, we obtain the fat realization: \vskip 0.3 cm
\begin{displaymath}
||\text{   }|P_.S_.|\text{
}||=\bigsqcup_{p\geq{0}}{\Delta}^p\times{|P_pS_.|}/_{\sim},
\end{displaymath}
where
\begin{displaymath}
|P_pS_.|=\bigsqcup{\Delta}^{q_0...q_p}\times{S_{q_0+...+q_p+p}}/_{\sim}
\end{displaymath}
and the face operators $\pi_i:|P_pS_.|\to{|P_{p-1}S_.|}$ are given
by $\pi_i={\text{proj}}_i\times{d_{(i)}}$ with
${\text{proj}}_i:\Delta^{q_0...q_p}\to{\Delta^{q_0...{\hat{q}}_i...q_p}}$
beeing the natural projection.

Note that $\lambda_p:\Delta^p\times{|P_pS_.|}\to{|S_.|}$ satisfies
\begin{displaymath}
\lambda_p\circ{(\varepsilon^i\times{\text{id}})}=\lambda_{p-1}\circ{(\text{id}\times{\pi_i})}.
\end{displaymath}
Thus $\lambda_p$ induces the map $\lambda$ on the fat realization.

\end{example}

Let $\|\text{   }|P_.S_.|\text{   }\|^p$ respectively $\|\text{
}|S_.|\text{   }\|^p$ denote the subcomplexes generated by
\linebreak $\Delta^p\times{|P_pS|}$ respectively
$\Delta^p\times{|S_.|}$. Then the restriction of $\Lambda$ to
$\|\text{   }|P_.S_.|\text{   }\|^p$ is given by
\begin{displaymath}
\Lambda_p(t,s,\sigma)=(t,\lambda_p(t,s,\sigma)).
\end{displaymath}

\begin{corollary}
The map $\Lambda_p$ induce a homeomorphism
\begin{displaymath}
\Lambda:\|\text{   }|P_.S_.|\text{   }\|\to{\|\text{   }|S_.|\text{
}\|}\approx{\|*\|\times{|S_.|}}.
\end{displaymath}
\end{corollary}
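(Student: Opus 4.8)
The plan is to leverage Theorem \ref{3.4} applied to the special case $f: S_. \to *$ of Example \ref{3.5}, combined with a skeleton-induction argument that matches the cell structure on both sides. First I would observe that the maps $\Lambda_p(t,s,\sigma) = (t, \lambda_p(t,s,\sigma))$ are compatible with the face identifications: from the identity $\lambda_p \circ (\varepsilon^i \times \mathrm{id}) = \lambda_{p-1} \circ (\mathrm{id} \times \pi_i)$ established at the end of Example \ref{3.5}, together with the fact that the $\Delta^p$-coordinate $t$ is transported unchanged, the collection $\{\Lambda_p\}$ respects the equivalence relation defining the very fat realization $\|\ |P_.S_.|\ \|$ and hence descends to a well-defined continuous map $\Lambda$ into $\|\ |S_.|\ \| \cong \|*\| \times |S_.|$. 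This is really just the restriction of the map $\Lambda$ from Theorem \ref{3.4} to the present situation, so continuity and well-definedness come for free.

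Next I would address bijectivity. For fixed $t \in \stackrel{\circ}{\Delta}^p$, Example \ref{3.5} already records that $\lambda_p(t,-): |P_pS| \to |S_.|$ is a homeomorphism. The point is to promote this fiberwise statement to a global one. I would filter $\|\ |P_.S_.|\ \|$ and $\|\ |S_.|\ \|$ by their $\Delta^p$-skeleta, i.e. by the subcomplexes $\|\ |P_.S_.|\ \|^{\le p}$ generated by $\Delta^0, \dots, \Delta^p$ and similarly $\|\ |S_.|\ \|^{\le p}$, and argue by induction on $p$ that $\Lambda$ restricts to a homeomorphism between corresponding skeleta. The inductive step is a pushout argument: passing from the $(p-1)$-skeleton to the $p$-skeleton glues on a copy of $\Delta^p \times |P_pS|$ (resp. $\Delta^p \times |S_.|$) along its boundary, and on the interior $\stackrel{\circ}{\Delta}^p \times |P_pS|$ the map is the homeomorphism $(t, x) \mapsto (t, \lambda_p(t,x))$ by the fiberwise statement, while on the boundary it agrees with the already-established homeomorphism on the $(p-1)$-skeleton via the compatibility identity for $\lambda_p$. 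By the gluing lemma for pushouts of spaces, $\Lambda$ is then a homeomorphism on the $p$-skeleton, and taking the colimit over $p$ gives the result. The final identification $\|\ |S_.|\ \| \approx \|*\| \times |S_.|$ is exactly the content of Theorem \ref{3.4} (or Example \ref{3.5}) specialized to $\bar S_. = *$, since $\|*\| = \bigcup_n \Delta^n/\partial\Delta^n$.

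The main obstacle I anticipate is the bookkeeping in the inductive step: one must verify carefully that the attaching maps on the $P_.S_.$ side and the $S_.$ side are intertwined by $\Lambda$, i.e. that the square expressing ``$(p-1)$-skeleton $\hookrightarrow$ $p$-skeleton via gluing along $\partial\Delta^p \times (-)$'' commutes on the nose after applying $\Lambda_p$ and $\Lambda_{p-1}$. This reduces to the single identity $\lambda_p \circ (\varepsilon^i \times \mathrm{id}) = \lambda_{p-1} \circ (\mathrm{id} \times \pi_i)$ holding not just up to homotopy but literally, which is already stated in Example \ref{3.5}; so in fact the obstacle is more notational than mathematical. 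A secondary point worth a sentence is that the fiberwise homeomorphism $\lambda_p(t,-)$ depends continuously (indeed, as $t$ varies over the open simplex, the family assembles into a homeomorphism $\stackrel{\circ}{\Delta}^p \times |P_pS| \to \stackrel{\circ}{\Delta}^p \times |S_.|$), which one needs in order to conclude that the glued-on piece maps homeomorphically; this is immediate from the explicit formula \eqref{3.1} for $\lambda$, which is jointly continuous in all variables and whose inverse is given by rescaling the barycentric coordinates back, again jointly continuously on the open simplex.
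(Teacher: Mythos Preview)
Your proposal is correct and follows essentially the same approach as the paper: the Corollary is stated there without a separate proof because it is just the specialization of Theorem~\ref{3.4} (and its proof) to $\bar S_. = *$, and your argument reproduces exactly that skeleton-induction reasoning together with the fiberwise homeomorphism $\lambda_p(t,-)$ recorded in Example~\ref{3.5}. The only difference is that you spell out in detail the pushout/gluing step that the paper's proof of Theorem~\ref{3.4} summarizes in one sentence.
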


\begin{corollary}
\label{composition} The composition map
${\text{proj}}_2\circ{\Lambda}=\lambda$
\begin{displaymath}
\|\text{   }|P_.S_.|\text{   }\|\to{\|\text{   }|S_.|\text{
}\|}\to{|S_.|}
\end{displaymath}
is a homotopy equivalence.
\end{corollary}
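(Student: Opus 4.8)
The plan is to observe that this corollary is almost a restatement of the preceding one, the only genuine ingredient being the (standard) contractibility of the ``fat point'' $\|*\|$. Recall from Example~\ref{3.5} and the corollary just above it that $\Lambda$ is a \emph{homeomorphism}
\[
\Lambda:\|\, |P_.S_.|\, \|\;\stackrel{\approx}{\longrightarrow}\;\|\, |S_.|\, \|\;\approx\;\|*\|\times |S_.|,
\]
where the identification $\|\, |S_.|\, \|\approx \|*\|\times |S_.|$ is on the nose: the simplicial space $p\mapsto |S_.|$ is constant, so $\bigsqcup_p\Delta^p\times |S_.|/\!\sim\,=\,(\bigsqcup_p\Delta^p/\!\sim)\times |S_.|$. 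Under this homeomorphism the map $\mathrm{proj}_2:\|\, |S_.|\, \|\to |S_.|$ becomes literally the projection $\|*\|\times |S_.|\to |S_.|$, and by construction $\lambda=\mathrm{proj}_2\circ\Lambda$ (indeed $\Lambda_p(t,s,\sigma)=(t,\lambda_p(t,s,\sigma))$). Hence it suffices to show that the projection $\|*\|\times |S_.|\to |S_.|$ is a homotopy equivalence, i.e. that $\|*\|$ is contractible.

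I would prove contractibility of $\|*\|$ by the usual extra-degeneracy argument. The terminal simplicial set $*$ carries an augmentation $*\to\mathrm{pt}$ together with an extra degeneracy $s_{-1}$; since every set in sight has one element, all simplicial identities (including those involving $s_{-1}$) hold trivially, and the fat realization of any simplicial space equipped with such an extra degeneracy deformation retracts onto its $(-1)$-term, here a point. Concretely the contraction slides a point of $\Delta^p$ towards the newly adjoined initial vertex in $\Delta^{p+1}$, compatibly with the face identifications, and one builds it up over the skeleta of $\|*\|=\bigcup_n\Delta^n/\partial\Delta^n$. (Equivalently, one may simply quote that $q:\|*\|\to |*|=\mathrm{pt}$ is a homotopy equivalence because the constant simplicial space $*$ is good, all its degeneracies being identity maps.)

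Granting this, picking any $v_0\in\|*\|$, the map $x\mapsto (v_0,x)$ is a homotopy inverse to $\|*\|\times |S_.|\to |S_.|$, so the projection is a homotopy equivalence; composing with the homeomorphism $\Lambda$ shows that $\lambda=\mathrm{proj}_2\circ\Lambda$ is a homotopy equivalence. The only nonformal step is the contractibility of $\|*\|$, and I expect that to be the ``hard part'' only in the sense of needing to be cited or spelled out; the rest follows mechanically from the preceding corollary. As an alternative one could instead invoke Theorem~\ref{3.4} for the augmentation $f:S_.\to *$: the pullback square there exhibits $\lambda$ as a base change of $q:\|*\|\to |*|$, so $\lambda$ is a homotopy equivalence as soon as $q$ is --- which is again exactly the contractibility of $\|*\|$.
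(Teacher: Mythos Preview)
Your argument is correct and matches the paper's approach: the paper treats this corollary as an immediate consequence of the preceding homeomorphism $\Lambda:\|\,|P_.S_.|\,\|\stackrel{\approx}{\to}\|*\|\times|S_.|$ together with the contractibility of $\|*\|$, exactly as you spell out. Your alternative at the end --- invoking Theorem~\ref{3.4} for $f:S_.\to *$ directly --- is in fact how Example~\ref{3.5} already phrases it, so both routes you describe are present in the paper.
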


{\bf Remark 1:} We can calculate the homology of the geometric
realization of a prismatic set as follows:

A prismatic set $P_{.,.}$ has a double complex $(C_{p,n}(PS),\partial_F,\partial_H)$.
Here
\begin{displaymath}
C_{p,n}(PS)=\bigoplus_{q_0+...+q_p=n}C_{p,q_0,...,q_p}(PS)
\end{displaymath}
is the associated chain complex $C_p(PS)$ generated by $P_{p,q_0,...,q_p}$.
The vertical boundary map is defined by using boundary maps in the fibre direction
\begin{displaymath}
{\partial_F}^i:PC_{p,{q_0,...,q_p}}\to{PC_{p,{q_0,...,q_i-1,...,q_p}}}
\end{displaymath}
defined by ${\partial_F}^i=\sum{(-1)}^jd_j^i$, where, if $q_i=0$ then ${\partial^i}_F=0$. The total vertical boundary map is then
\begin{eqnarray*}
\label{vertical}
\partial_V={\partial^0}_F+{(-1)}^{q_0+1}{\partial^1}_F+...+{(-1)}^{q_0+...+q_{p-1}+p}{\partial^p}_F.
\end{eqnarray*}
There is also a horizontal boundary map
\begin{eqnarray*}
\label{horizontal}
\partial_H=\partial_0+{(-1)}^{q_0+1}\partial_1+...+{(-1)}^{q_0+...+q_{p-1}+p}\partial_p,
\end{eqnarray*}
where
\begin{eqnarray*}
\partial_k= \left\{ \begin{array}{r@{\quad: \quad}l}0 & \text{   }\text{if}\text{   }q_k>0  \\
d_k & \text{   }\text{if}\text{   }q_k=0, \end{array} \right.
\end{eqnarray*}
so that $\partial=\partial_V+\partial_H$ is a boundary map in the total complex $PC_*$ which is the cellular chain complex for the geometric realization.
Hence it calculates the homology. In the case of
$P_.(f)$ for $f:S\to{\bar{S}}$ a simplicial map, the double complex gives rise to a spectral sequence
which for a triangulated fibre bundle is the usual Leray-Serre spectral sequence.

{\bf Remark 2:}  For each $p$ and each
$t\in{{\stackrel{\circ}{\Delta}}^p}$,
$\lambda_p(t)^{-1}:|S|\to{\{t\}\times|P_pS|}$ induces a map of
cellular chain complexes
\begin{displaymath}
aw:C_*(S)\to{C_{*,*}(PS)}
\end{displaymath}
given by
\begin{displaymath}
aw(x)=\sum_{q_0+...+q_p=n}s_{q_0+...+q_{p-1}+p-1}\circ{...}\circ{s_{q_0}}(x)_{(q_0,...,q_p)},
\end{displaymath}
where $x\in{S_n}$.

\newpage

\medbreak
\section{\bf{Prismatic Sets and Stars of Simplicial Complexes}\label{four}}

For a simplicial set $S$ and the prismatic triangulation $P_.S$ there is another closely related prismatic set ${\bar{P}}_pS_.$ which as we
shall see for a simplicial complex is the nerve of the covering by stars of vertices considered as a prismatic set.

\begin{definition}
\label{3.7}

For $S$ a simplicial set let $\bar{P_.}S$ be the prismatic set given by
\begin{displaymath}
{\bar{P}}_pS_{q_0,...,q_p}:=S_{q_0+...+q_p+2p+1}.
\end{displaymath}
where face and degeneracy operators on $\bar{P_p}S_{q_0,...,q_p}$ are inherited
from the ones of $S_{q+2p+1}$ as follows:

Let $q=q_0+...+q_p$, the face operators
\begin{displaymath}
d_j^{(i)}:S_{q+2p+1}={\bar{P}}_pS_{q_0,...,q_p}\to{S_{q+2p}}={\bar{P}}_pS_{q_0,...,q_i-1,...,q_p}
\end{displaymath}
are defined by
\begin{displaymath}
d_j^{(i)}:=d_{q_0+...+q_{i-1}+2i+j},\text{   }j=0,...,q_i\text{   }\text{but}\text{   }j\not={2i+1+\sum_{k=0}^iq_k}.
\end{displaymath}
So $\bar{P_p}S_{q_0,...,q_p}$ has only $q+p$-face operators, i.e., we skip the following $p+1$ face operators
\begin{displaymath}
 \{d_{q_0+1},d_{q_0+q_1+3},...,d_{q+2p+1}\}.
\end{displaymath}

Similarly the degeneracy operators
\begin{displaymath}
s_j^{(i)}:S_{q+2p+1}\to{S_{q+2p+2}}
\end{displaymath}
can be defined by
\begin{displaymath}
s_j^{(i)}:=s_{q_0+...+q_{i-1}+2i+j},\text{   }j=0,...,q_i,\text{   }\text{but}\text{   }j\not={2i+1+\sum_k^iq_k}.
\end{displaymath}
Furthermore the face operators are
\begin{displaymath}
d_{(i)}:S_{q+2p+1}={\bar{P}}_pS_{q_0,...,q_p}\to{S_{q+2p-q_i-1}}={\bar{P}}_{p-1}S_{q_0,...,{\hat{q}}_i,...,q_p}
\end{displaymath} corresponding to
\begin{displaymath}
\varepsilon^{(i)}:{\Delta}^{q+2p-q_i-1}\to{{\Delta}^{q+2p+1}}
\end{displaymath}
which take $(e_0,...,e_{q_0+...+{\hat{q}}_i+...+q_p+2p-1})$ to
$(e_0,...,e_{q+2p+1})$, by deleting the vectors with
indices $(q_0+...+q_{i-1}+2i,...,q_0+...+q_i+2i+1)$. So it deletes
$q_i+2$ elements. That is,
\begin{displaymath}
d_{(i)}=d_{q_0+...+q_{i-1}+2i}\circ{...}\circ{d_{q_0+...+q_i+2i+1}},\text{   }i=0,...,p.
\end{displaymath}

\end{definition}

{\bf Remark:} As $P_.S$, ${\bar{P}}_.S$ is a prismatic set but in
general not a strong prismatic set.

{\bf Realization of ${\bar{P}_.S_.}$:}

Notice that
\begin{displaymath}
\|\text{   }|\bar{P_.}S_.|\text{
}\|=\bigsqcup_{p\geq{0}}{\Delta}^p\times{{\Delta}^{q_0...q_p}}
\times{{\bar{P}}_pS_{q_0,...,q_p}}/_{\sim}
\end{displaymath}
where the equivalence relation apart from the internal relations in $|{\bar{P}}_pS|$ using $d_j^{(i)}$ and $s_j^{(i)}$,
include the relations
\begin{displaymath}
(\varepsilon^it,(s,y))\sim(t,\pi_i(s,y)),
\end{displaymath}
with $\pi_i=({\text{proj}}_i)\times{d_{(i)}}$ the face operators on $|\bar{P_p}S_.|$.

The relation of ${\bar{P}}_.S$ with $S_.$ and $P_.S$ is as follows:

\begin{proposition}
\label{3.9}

Let $i:\|S_.\|\hookrightarrow{\|\text{   }|{\bar{P}}_pS_.|\text{   }\|}$ be an inclusion defined for \linebreak $(t,x)\in{\Delta^p\times{S_p}}$ by
\begin{displaymath}
i(t,x)=(t,1,s_0\circ{...}\circ{s_p}x)\in{\Delta^p\times{{(\Delta^0)}^{p+1}}\times{S_{2p+1}}}\subseteq{\Delta^p\times{|{\bar{P}}_pS_.|}},
\end{displaymath}
and $r:\|\text{   }|{\bar{P}}_.S_.|\text{   }\|\to{\|S_.\|}$ be the retraction defined for $(t,s,y)\in{\Delta^p\times{\Delta^{q_0...q_p}}\times{S_{q+2p+1}}}$
\begin{displaymath}
r(t,s,y)=(t,d_{0...q_0}\circ{{\hat{d}}_{q_0+1}}\circ{...}\circ{d_{(q_0+...+q_{p-1}+2p)...(q+2p)}}\circ{{\hat{d}}_{q+2p+1}}y),
\end{displaymath}
where the ${\hat{d}}_i$ are left out and $d_{(q_0+...+q_{i-1}+2i)...(q_0+...+q_i+2i)}=d_{q_0+...+q_{i-1}+2i}\circ{...}\circ{d_{q_0+...+q_i+2i}}$, $i=0,...,p$.

1) $i$ is a deformation retract with the retraction $r$.

2) There is a commutative diagram of homotopy equivalences

\begin{eqnarray*}
\xymatrix{
||S_.||\ar[r]^{i}\ar[dr] \ar[ddr]
&\|\text{   }|{\bar{P}}_.S_.|\text{   }\| \ar[d]^{f}\\
&\|\text{   }|P_.S_.|\text{   }\| \ar[d]^{\Lambda}\\
&\|\text{   }|S_.|\text{   }\|
}
\end{eqnarray*}
where $f:\Delta^p\times{\Delta^{q_0...q_p}\times{S_{q+2p+1}}}\to{\Delta^p\times{\Delta^{q_0...q_p}}}\times{S_{q+p}}$ takes $(t,s^0,...,s^p,x)$ to
\linebreak
$(t,s^0,...,s^p,d_{q_0+1}\circ{d_{q_0+q_1+3}}\circ{...}\circ{d_{q+2p+1}}x)$, $x\in{S_{q+2p+1}}$.

\end{proposition}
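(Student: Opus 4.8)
The plan is to treat the three assertions in order, building everything out of explicit simplicial identities together with the results of Section~\ref{three}. First I would check that $i$ and $r$ are well-defined maps of $\Delta$-spaces, i.e.\ that they respect the face operators $\pi_i = \mathrm{proj}_i \times d_{(i)}$ and the internal face/degeneracy operators $d_j^{(i)}, s_j^{(i)}$. For $i$ this is a direct consequence of the simplicial identities: the element $s_0 \circ \cdots \circ s_p\, x \in S_{2p+1}$ lies in the ``fully degenerate'' corner $(\Delta^0)^{p+1}$, and one verifies that applying $d_{(i)}$ to it reproduces $s_0 \circ \cdots \circ s_{p-1}\, d_i x$, which is compatibility with the face maps on $\|S_.\|$. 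For $r$, the composite of face operators appearing in its definition is exactly the one that, at the level of $\mu$-type maps, collapses each prism factor $\Delta^{q_i}$ to its initial vertex, so that on the corner $(\Delta^0)^{p+1}$ it is literally the identity; compatibility with $\pi_i$ is again a bookkeeping check using the Definition~\ref{2.2} identities and the explicit formula for $d_{(i)}$ from Definition~\ref{3.7}. It follows immediately from these formulas that $r \circ i = \mathrm{id}_{\|S_.\|}$.

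For part~(1) it then remains to produce a deformation of $\|\,|\bar P_.S_.|\,\|$ onto the image of $i$ covering $r$. The natural choice is a fibrewise straight-line homotopy on each prism: on a cell $\Delta^p \times \Delta^{q_0\dots q_p} \times S_{q+2p+1}$, contract each factor $\Delta^{q_i}$ linearly to its initial vertex, keeping the $\Delta^p$ coordinate fixed. Concretely, for $u\in[0,1]$ set $H_u$ to be the map induced by $(t,s^0,\dots,s^p,y)\mapsto (t, (1-u)s^0 + u e_0,\dots,(1-u)s^p + u e_0, y)$. One checks this descends to the realization — the only subtlety is that as $u\to 1$ one must simultaneously absorb the factors into degeneracies of $y$, which is precisely why the target is taken inside $S_{2p+1}$ via $s_0\circ\cdots\circ s_p$ on the retracted simplex; the explicit description of the equivalence relation given just before the Proposition makes this identification legitimate. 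Then $H_0 = \mathrm{id}$ and $H_1 = i\circ r$, giving the deformation retraction.

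For part~(2), the map $f$ of Definition~\ref{3.7}/the Proposition is the one that forgets exactly the $p+1$ ``extra'' face directions $\{d_{q_0+1}, d_{q_0+q_1+3},\dots,d_{q+2p+1}\}$ distinguishing $\bar P_pS_{q_0,\dots,q_p} = S_{q+2p+1}$ from $P_pS_{q_0,\dots,q_p} = S_{q+p}$; I would verify it is a map of prismatic sets by matching the face/degeneracy formulas of Definitions~\ref{3.5} and~\ref{3.7} — this is the same index arithmetic as above. That $f$ is a homotopy equivalence follows because on each prism $f$ is (up to the identifications already used in part~(1)) the projection killing a product of cones, so one can run the same kind of skeletal induction as in the proof of Theorem~\ref{3.4}; alternatively, $f\circ i$ is a model for the inclusion $\|S_.\|\hookrightarrow \|\,|P_.S_.|\,\|$ appearing in Example~\ref{3.5}, and $\Lambda$ is a homeomorphism by Corollary~\ref{composition}, so commutativity of the lower triangle plus the two-out-of-three property forces $f$ to be a homotopy equivalence once $i$ is (part~(1)). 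The triangles commute by direct substitution into the formulas for $i$, $f$, and $\lambda_p$: pushing $i(t,x)=(t,1,s_0\cdots s_p x)$ through $f$ gives $(t,1,s_0\cdots s_p\, x)$ with the extra degeneracies cancelled against the deleted faces — i.e.\ $(t,1,x)$ in $|P_pS_.|$ — and then $\Lambda$ sends this to $(t,x)\in\|\,|S_.|\,\|$, matching the downward maps from $\|S_.\|$.

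The main obstacle I anticipate is bookkeeping rather than conceptual: keeping the shifted indices straight when checking that $i$, $r$, and $f$ commute with all the structure maps, and — more delicately — making the contraction homotopy $H_u$ genuinely well-defined on the realization at the endpoint $u=1$, where several prism factors collapse at once and must be reconciled with the degeneracy identifications encoding $s_0\circ\cdots\circ s_p$. Once that endpoint compatibility is pinned down, everything else is a routine consequence of the simplicial identities and Theorem~\ref{3.4}.
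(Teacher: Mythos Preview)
The paper offers no proof beyond ``The proof is straight forward see [B] for details'', so there is nothing to compare your argument against; it has to stand on its own. Your plan for the well-definedness of $i$, $r$, $f$, the identity $r\circ i=\mathrm{id}$, and the commutativity of the triangles in part~(2) is fine --- these really are routine simplicial-identity checks.

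The gap is the homotopy in part~(1). The linear contraction $H_u(t,s,y)=(t,(1-u)s+u\,e_0,y)$ fails in two ways. First, it does not descend to $|\bar P_pS|$: the face identification $(\varepsilon^0\tilde s,y)\sim(\tilde s,d_0^{(i)}y)$ is not respected, since $\varepsilon^0$ carries $e_0\in\Delta^{q_i-1}$ to $e_1\in\Delta^{q_i}$, not to $e_0$, so the two representatives of a boundary point move to genuinely different points under $H_u$. (Contracting to $e_{q_i}$ fails symmetrically at $\varepsilon^{q_i}$.) Second, even if one ignored this, the endpoint is wrong: at $u=1$ the point $(e_0,\dots,e_0,y)$ is identified via \emph{face} relations with $(\mathrm{pt},Dy)$, where $Dy\in S_{2p+1}$ retains from each block of $q_i+2$ vertices the first and the last; but $i\circ r$ produces $s_0\cdots s_p(ry)$, which is the last vertex of each block \emph{doubled}. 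These are different elements of $S_{2p+1}$ for generic $y$, and no further identification in the $(0,\dots,0)$ cell merges them. Your sentence about ``absorbing the factors into degeneracies of $y$'' is exactly where this breaks --- the absorption is through faces, not degeneracies.

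A clean repair is to reverse the dependence between the two parts. Your direct argument that $f$ is a homotopy equivalence (each block acquires one extra cone vertex, so over each cell of $\|\,|P_.S|\,\|$ the fibre of $f$ is a product of cones, and one runs the skeletal induction of Theorem~\ref{3.4}) is independent of part~(1). Once $f$ and $\Lambda$ are homotopy equivalences and the triangles commute, two-out-of-three makes $i$ a homotopy equivalence; as $i$ is a cellular inclusion of CW complexes, it is then automatically the inclusion of a deformation retract, giving part~(1). If you want an explicit homotopy instead, it has to be built more carefully than by a single global linear formula --- e.g.\ inductively over skeleta, using the cone structure contributed by the extra vertex in each block.
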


The proof is straight forward see \cite{B} for details.

For a simplicial complex $K$ there is another prismatic complex defined using the stars of simplices.
That is, let $K_0=\{a_i|i\in{I}\}$, where
$I=\{1,...,N\}$, be the set of vertices
and let $K_n=\{\sigma=(a_{i_0},...,a_{i_n})|i_0<...<i_n\}$ be the set of $n$-simplices such that if $\sigma\in{K_n}$ then
any face $\tau=(a_{i_{j_0}},...,a_{i_{j_k}})$ lies in $K_k$. We shall write $\tau\preccurlyeq\sigma$ in this case.
Now $K\times{K}$ is also a simplicial complex with the lexicographical order of the vertices
\begin{displaymath}
(a_i,b_j)<(a_{i^\prime},b_{j^\prime})
\Leftrightarrow\text{either}\text{   }i<{i^\prime} \text{
  }\text{or}\text{   }i=i^\prime\text{   }\text{and}\text{  }j<j^\prime,
\end{displaymath}
where $\{(a_{i_0},b_{j_0}),...,(a_{i_n},b_{j_n})\}\in{K\times{K}}$.

\begin{definition}
  \label{4.1}
  Let $K$ be a simplicial complex. The Star of $K$ is defined as
\begin{displaymath}
\text{St}(K)=\{(\sigma,\tau)\in{K\times{K}}\text{
  }|\text{   }\exists\text{   }{\sigma^\prime}\text{   }\text{such
that}\text{   }\sigma\cup\tau\preccurlyeq\sigma^\prime\}\subseteq{K\times{K}}.
\end{displaymath}
This is equivalent to say that
\begin{displaymath}
\text{St}(K)=\{\text{faces of}\text{
  }\sigma^\prime\times{\sigma^\prime}\subseteq{K\times{K}}\}.
\end{displaymath}
\end{definition}

{\bf Remark 1:} For each $\sigma\in{K}$, $(\{\sigma\}\times{K})\cap{\text{St}}(K)$ is the closure of the usual open star of $\sigma$,
i.e., the union of the open simplices having $\sigma$ as a face. Whence the name $\text{St}(K)$.
Note that $\text{St}(K)\subseteq{K\times{K}}$ is a subcomplex.

Let $K^s$ denote the simplicial set associated to the simplicial complex $K$. That is,
\begin{displaymath}
{K_n}^s=\{(a_{i_0},...,a_{i_n})\text{ }|\text{
  }\{a_{i_0},...,a_{i_n}\}\text{   }\text{a simplex of}\text{   }K\text{   }\text{(with repetitions)}\text{   }i_0\leq{...}\leq{i_n}\}.
\end{displaymath}

\newpage

$\St(K)_n ^s$ are the following:

Let $(\sigma,\tau)\in{K\times{K}}$, where
$\sigma=(a_{i_0},...,a_{i_p})$, $\tau=(b_{j_0},...,b_{j_q})$.
For $(\sigma,\tau)\in{\text{St}(K)}$, let
$\sigma^\prime=\sigma\cup\tau=(c_{k_0},...,c_{k_n})$. By allowing repetitions in Definition \ref{4.1}, i.e., by taking
$\sigma^\prime\in{K^s}$, we can assume $n=p+q$ so that either
$c_{k_s}=a_{i_t}$ or $c_{k_s}=b_{j_u}$, where $t=0,...,p$, $u=0,...,q$. Also we can assume $c_{k_n}=a_{i_p}$,
and if $a_{i_t}=b_{j_u}$ then $b_{j_u}$ comes before $a_{i_t}$.
In other words $(\sigma,\tau)$ is of the form
\begin{displaymath}
\sigma=d_{\nu_1...\nu_q}\sigma^\prime,\text{   }\tau=d_{\mu_1...\mu_p}\sigma^\prime,
\end{displaymath}
where $0\leq{\nu_1}<...<\nu_q<n$ and $0\leq{\mu_1}<...<\mu_p\leq{n}$
and $\mu_i\neq{\nu_j}$, $\forall{i,j}$. Therefore we introduce for a
general simplicial set $S$ the following.

\begin{definition}
  \label{4.2}
Let $\text{St}(S)$ be the simplicial subset of the diagonal $\delta(S\times{S})$ containing all
  simplices of the form

\begin{displaymath}
(s_{\nu_q...\nu_1}\circ{d_{\nu_1...\nu_q}}\sigma^\prime,s_{\mu_p...\mu_1}\circ{d_{\mu_1...\mu_p}}\sigma^\prime),
\end{displaymath}
where $0\leq{\nu_1}<...<\nu_q<n$ and $0\leq{\mu_1}<...<\mu_p\leq{n}$
with $\mu_i\neq{\nu_j}$, $\forall{i,j}$ as above.
Here $s_{\nu_q...\nu_1}=s_{\nu_q}\circ{...}\circ{s_{\nu_1}}$ ,
$d_{\nu_1...\nu_q}=d_{\nu_1}\circ{...}\circ{d_{\nu_q}}$, $s_{\mu_p...\mu_1}=s_{\mu_p}\circ{...}\circ{s_{\mu_1}}$ and
$d_{\mu_1...\mu_p}=d_{\mu_1}\circ{...}\circ{d_{\mu_p}}$.

\end{definition}

\begin{lemma}
  \label{4.3}
For $K$ a simplicial complex, there is a map
\begin{displaymath}
\st:\St(K)^s\to{\St(K^s)}
\end{displaymath}
which is an isomorphism.

\end{lemma}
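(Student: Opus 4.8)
The plan is to unwind both sides of the asserted bijection and check directly that the natural assignment of Definition~\ref{4.2} restricts to a bijection on the subsets coming from a simplicial complex. First I would describe the two sides concretely. On the left, a nondegenerate $n$-simplex of $\St(K)^s$ is, by the discussion preceding Definition~\ref{4.2} together with Example~\ref{2.3}, a pair $(\sigma,\tau)\in K^s\times K^s$ that arises as $(\,d_{\nu_1\dots\nu_q}\sigma',\,d_{\mu_1\dots\mu_p}\sigma'\,)$ for some $\sigma'=\sigma\cup\tau\in K^s_n$ and a disjoint splitting $\{0,\dots,n\}=\{\nu_1<\dots<\nu_q\}\sqcup\{\mu_1<\dots<\mu_p\}$ with $\mu_p=n$; a general $n$-simplex of $\St(K)^s$ is then a degeneracy of such a pair. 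On the right, by Definition~\ref{4.2}, $\St(K^s)$ consists of the diagonal-type simplices $(\,s_{\nu_q\dots\nu_1}d_{\nu_1\dots\nu_q}\sigma',\,s_{\mu_p\dots\mu_1}d_{\mu_1\dots\mu_p}\sigma'\,)$ for the same kind of data. So the map $\st$ should send the left-hand pair $(\sigma,\tau)$ — viewed inside $K^s_p\times K^s_q\subseteq\delta(K^s\times K^s)$ only after re-inflating to degree $n$ — to the pair in which the missing vertices are restored as degeneracies; equivalently $\st$ is the unique simplicial map induced by the identity on vertices, and the content of the lemma is that on the complex-derived subsets it is bijective.

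Next I would set up $\st$ cleanly. Since $\St(K)^s$ and $\St(K^s)$ are both simplicial subsets of $\delta(K^s\times K^s)$ (the first a priori sits in $K\times K$ and one uses $K^s$-coordinates; the second is defined as such a subset), it suffices to show that they are equal as subsets of $\delta(K^s\times K^s)_n$ for every $n$, and then $\st$ is literally the identity, hence trivially an isomorphism. Thus the real work is the set-theoretic identity
\begin{displaymath}
\St(K)^s_n=\St(K^s)_n\subseteq (K^s\times K^s)_n.
\end{displaymath}
I would prove the two inclusions separately. For $\St(K)^s\subseteq\St(K^s)$: take a simplex of $\St(K)^s_n$; it is a degeneracy $s_?(\sigma,\tau)$ of a nondegenerate pair coming from $\sigma'\in K^s$ and a splitting as above, and one checks by the simplicial identities ($d_is_j$ and $s_is_j$ relations of Definition~\ref{2.2}) that $s_?(\sigma,\tau)$ is exactly of the form displayed in Definition~\ref{4.2}, possibly for a larger index set; the key combinatorial point is that applying an outer degeneracy to $d_{\nu_1\dots\nu_q}\sigma'$ and to $d_{\mu_1\dots\mu_p}\sigma'$ simultaneously corresponds to inserting one more common index, which preserves disjointness of the two index sets. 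For the reverse inclusion $\St(K^s)\subseteq\St(K)^s$: given $(\,s_{\nu_q\dots\nu_1}d_{\nu_1\dots\nu_q}\sigma',\,s_{\mu_p\dots\mu_1}d_{\mu_1\dots\mu_p}\sigma'\,)$ with $\sigma'\in K^s_n$, I note that $\sigma'$ is a simplex of $K$ with repetitions, so its underlying set of vertices spans a genuine simplex $\sigma''$ of $K$; then $\sigma=d_{\nu_1\dots\nu_q}\sigma'$ and $\tau=d_{\mu_1\dots\mu_p}\sigma'$ are faces-with-repetition of $\sigma''$, and since $\sigma\cup\tau\preccurlyeq\sigma''$ in $K$ the pair lies in $\St(K)$ by Definition~\ref{4.1}, hence after passing to $K^s$ and reintroducing the degeneracies one lands in $\St(K)^s_n$.

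I would then record, as a remark, that both maps $d$ and $s$ in the displayed formulas are mutually inverse reparametrizations (inserting degeneracies vs.\ deleting repeated vertices), so no information is lost and the correspondence is functorial in $K$ — this is what is being asserted by "isomorphism" at the level of simplicial sets, not merely a degreewise bijection. The main obstacle I anticipate is purely bookkeeping: matching the index conventions of the paragraph before Definition~\ref{4.2} (where $\mu_p=n$, $c_{k_n}=a_{i_p}$, and ``if $a_{i_t}=b_{j_u}$ then $b_{j_u}$ comes first'') with the symmetric-looking formulas in Definition~\ref{4.2}, and checking that the normalization choices ($\mu_p=n$ on one side, the ordering tie-break for repeated vertices) exactly account for the degeneracies that distinguish $\St(K)^s$ from $\St(K^s)$, so that the count of indices works out and disjointness $\mu_i\neq\nu_j$ is preserved under every face and degeneracy. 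Once the degreewise identity is established, compatibility of $\st$ with faces and degeneracies is automatic because $\st$ is the restriction of the identity of $\delta(K^s\times K^s)$, and the lemma follows.
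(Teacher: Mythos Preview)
Your approach is essentially the same as the paper's: define $\st$ by re-inflating $(\sigma,\tau)$ via the degeneracies $s_{\nu_q\dots\nu_1},s_{\mu_p\dots\mu_1}$, and obtain the inverse by deleting repetitions to recover the underlying element of $K\times K$. The paper's proof is simply a two-line version of what you propose, citing the discussion before Definition~\ref{4.2} for well-definedness and observing that ``deleting repetitions'' is the inverse. One small caution on your framing: the assertion that $\St(K)^s$ is already a simplicial subset of $\delta(K^s\times K^s)$, so that $\st$ is ``literally the identity'', is precisely what has to be shown---the embedding you have in mind \emph{is} the map $\st$, and your two inclusions are exactly the verification that it is a bijection onto $\St(K^s)$; so the argument is correct, but the ``identity'' phrasing obscures rather than simplifies.
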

\begin{proof}: By the discussion made before, there is a
well-defined map st. Indeed, $\text{St}(K)^s$ is a simplicial set
generated by
\begin{eqnarray*}
\{(\sigma,\tau)\in{K\times{K}}\text{ }|\text{ }\exists \text{   }
\sigma^\prime\in{K^s}\text{   }
  \text{such that}\text{   }
\sigma=d_{\nu_1...\nu_q} \sigma ^\prime,
\tau=d_{\mu_1...\mu_p} \sigma ^\prime\\
\text{  }\text{and}\text{   }
(s_{\nu_q...\nu_1}
\sigma,s_{\mu_p...\mu_1} \tau)\in{\delta(K^s\times{K^s})}
\}.
\end{eqnarray*}

By Definition ~\ref{4.2}, we can put
$\st(\sigma,\tau)=(s_{\nu_q...\nu_1}\sigma,s_{\mu_p...\mu_1}\tau)\in{\St(K^s)}$.
Clearly $\st$ is an isomorphism since for
$(\sigma,\tau)\in{\delta(K^s\times{K^s})}$ and $\sigma^\prime$ as in
Definition ~\ref{4.2}, $(\sigma,\tau)\in{K^s\times{K^s}}$ determines
an element in $K\times{K}$ by deleting repetitions and this is
unique.
\end{proof}

{\bf Remark 2:} The projection on the first factor
$\pi_1:S\times{S}\to{S}$ gives a simplicial map
$\pi_1:\text{St}_.(S)\to{S}$. Hence, we obtain a prismatic set
$P_.\text{St}(S)=P_.(\pi_1)$ as in Example \ref{3.3}. Here with
$q=q_0+...+q_p$ and
$\sigma=s_{\nu_q...\nu_1}\circ{d_{\nu_1...\nu_q}}\sigma^\prime=\mu_{q_0,...,q_p}\bar{\sigma}$,
$\tau=s_{\mu_p...\mu_1}\circ{d_{\mu_1...\mu_p}}\sigma^\prime$, we
have
\begin{displaymath}
P_p\text{St}(S)_{q_0,...,q_p}=\{(\sigma,\tau,
\bar{\sigma})\in{\text{St}(S)_{q+p}}\times{S_p}\subset{\delta}(S\times{S})_{q+p}\times{S_p}
\text{   }|\text{   } \text{ }\sigma,\tau\text{
 }\text{given above}
\}.
\end{displaymath}
That is, $\pi_1(\sigma,\tau)=\mu_{q_0,\dots,q_p}(\bar{\sigma})$, where
$\bar{\sigma}=d_{\nu_1...\nu_q}\sigma^\prime\in{S_p}$.
So The elements in $P_p\St(S)_{q_0,\dots,q_p}$ are of the form
$(\mu_{q_0,\dots,q_p}\bar{\sigma},\tau,\bar{\sigma})$, where
$\tau\in{S_q}$. Here explicitly

\begin{displaymath}
\mu_{q_0,\dots,q_p}={\hat{s}}_{q+p}\circ{s_{({q+p-1})...({q_0+...+q_{p-1}+p}})}...{\hat{s}}_{q_0+q_1+1}
s_{(q_0+q_1)...(q_0+1)}{\hat{s}}_{q_0}s_{(q_0-1)...(0)}.
\end{displaymath}

\newpage

\medbreak
\section{\bf{Comparison of the two Star Simplicial Sets}\label{five}}

We shall now prove that this is closely related to the prismatic
set $\bar{P}S$ defined in the previous section.

\begin{theorem}
    \label{4.4}

    1) There is a natural \textbf{(surjective)} map
    \begin{displaymath}
\bar{p}:\bar{P}_.S_.\to{P{\St}_.(S)_.}
\end{displaymath}

2) If $S=K^s$, where $K$ is a simplicial complex, then $\bar{p}$ is
an isomorphism.

\end{theorem}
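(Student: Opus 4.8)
The plan is to build the map $\bar p$ level by level from the description of $\bar P_\cdot S$ in Definition~\ref{3.7} and of $P\St_\cdot(S)$ in Remark~2 of \S\ref{four}, and then to check that for $S=K^s$ it is a bijection using Lemma~\ref{4.3}.

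\textbf{Construction of $\bar p$ (part 1).} Recall $\bar P_pS_{q_0,\dots,q_p}=S_{q+2p+1}$ with $q=q_0+\dots+q_p$, while $P_p\St(S)_{q_0,\dots,q_p}$ consists of triples $(\sigma,\tau,\bar\sigma)$ with $\bar\sigma\in S_p$, $\sigma=\mu_{q_0,\dots,q_p}\bar\sigma\in S_{q+p}$, $\tau\in S_{q+p}$, and $(\sigma,\tau)\in\St(S)_{q+p}\subseteq\delta(S\times S)_{q+p}$, i.e.\ $\tau=s_{\mu_p\dots\mu_1}\circ d_{\mu_1\dots\mu_p}\sigma'$ for a suitable $\sigma'$. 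Given $x\in S_{q+2p+1}=\bar P_pS_{q_0,\dots,q_p}$, I would set
\begin{displaymath}
\bar p(x)=\bigl(\,d_{q_0+1}d_{q_0+q_1+3}\cdots d_{q+2p+1}\,x\;,\;d_{0\dots q_0}d_{q_0+2\dots q_0+q_1+2}\cdots d_{(q_0+\dots+q_{p-1}+2p)\dots(q+2p)}\,x\;,\;\bar\sigma\,\bigr),
\end{displaymath}
where the first coordinate is the map $f$ of Proposition~\ref{3.9} (deleting the ``odd'' skipped vertices, landing in $S_{q+p}$), the second coordinate deletes the ``even'' blocks of skipped vertices, and $\bar\sigma=d_{\nu_1\dots\nu_q}$ of the first coordinate lies in $S_p$. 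First I would verify that the first coordinate equals $\mu_{q_0,\dots,q_p}\bar\sigma$ and that the pair (first, second) lies in $\St(S)_{q+p}$ — this is exactly the combinatorial bookkeeping already encoded in the index sets $\{\nu_j\}$, $\{\mu_i\}$ of Definition~\ref{4.2}, applied with the two interleaved families of deleted indices from Definition~\ref{3.7}. Then I would check compatibility with all the structure maps $d_j^{(i)},s_j^{(i)},d_{(i)}$: since on both sides these are induced by face/degeneracy operators of $S$ with indices given by the same shift formulas ($q_0+\dots+q_{i-1}+2i+j$ versus the corresponding shifts after deletion), this reduces to the simplicial identities of Definition~\ref{2.2}, which is routine. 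Surjectivity: given $(\sigma,\tau,\bar\sigma)=(\mu_{q_0,\dots,q_p}\bar\sigma,\;s_{\mu_p\dots\mu_1}d_{\mu_1\dots\mu_p}\sigma',\;\bar\sigma)$, one produces a preimage by inserting, at each of the $p+1$ deleted slots, the appropriate degenerate copy — concretely $x=s_{q_0}s_{q_0+q_1+2}\cdots s_{q+2p}(\text{something})$ built so that the odd deletions recover $\sigma$ and the even deletions recover $\tau$; existence of $\sigma'$ is what makes this possible.

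\textbf{The isomorphism for $S=K^s$ (part 2).} For $S=K^s$ I would argue that $\bar p$ is then also injective, hence bijective. The point is that an element of $P_p\St(K^s)_{q_0,\dots,q_p}$ is, by Lemma~\ref{4.3}, the same as an element of $\St(K)^s$ together with a choice of how the vertices of $\tau$ interleave those of $\sigma$, i.e.\ the data $(\sigma',\{\nu_j\},\{\mu_i\})$ up to the normalization ``delete repetitions,'' and this is precisely the data recorded by a nondegenerate-enough simplex $x\in K^s_{q+2p+1}$: the $q+p$ retained vertices give $\sigma'$ (equivalently $\bar\sigma$ and $\tau$) and the $p+1$ deleted vertices are forced to be repetitions of adjacent retained ones, so $x$ is determined. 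Conversely any $x\in K^s_{q+2p+1}$ of that shape arises. I would make this precise by exhibiting the inverse map explicitly — send $(\sigma,\tau,\bar\sigma)$, viewed via $\st^{-1}$ as $(\sigma',\nu_*,\mu_*)\in\St(K)^s$, to the simplex of $K^s_{q+2p+1}$ obtained by inserting the prescribed repeated vertices at the $p+1$ skipped positions — and checking it is two-sided inverse to $\bar p$, which is immediate from the ``delete repetitions is unique'' clause in the proof of Lemma~\ref{4.3}.

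\textbf{Main obstacle.} The genuinely delicate point is part~1: organizing the two interleaved families of skipped indices (the ``$d_{q_0+1},d_{q_0+q_1+3},\dots$'' from the $f$-direction and the remaining ones) so that the pair of face-compositions really lands in $\St(S)_\cdot$ in the sense of Definition~\ref{4.2}, with the strict inequalities $\nu_1<\dots<\nu_q<n$ and $\mu_1<\dots<\mu_p\le n$ and $\mu_i\neq\nu_j$ holding, and so that $\bar p$ commutes with the $\Delta$-face maps $d_{(i)}$ (which change $p$ and hence shift everything). I expect this to be pure index-shuffling governed by the simplicial identities, with no conceptual content beyond careful bookkeeping; part~2 is then essentially a corollary of Lemma~\ref{4.3}.
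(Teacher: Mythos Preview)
Your overall strategy is exactly the paper's: define $\bar p(\gamma)$ via two complementary families of face operators on $\gamma\in S_{q+2p+1}$, check the image lies in $P_p\St(S)$, produce a preimage by inserting degeneracies for surjectivity, and for $S=K^s$ invert explicitly using that a simplex of $K^s$ is determined by its vertex string (Lemma~\ref{4.3}). However, your explicit formula has the coordinates misassigned, and this makes the very first verification fail.

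Concretely: your first entry $d_{q_0+1}d_{q_0+q_1+3}\cdots d_{q+2p+1}\,x$ is what should be called $\tau\in S_{q+p}$, and your second entry $d_{0\dots q_0}d_{q_0+2\dots q_0+q_1+2}\cdots d_{(q_0+\dots+q_{p-1}+2p)\dots(q+2p)}\,x$ deletes $(q_0+1)+\cdots+(q_p+1)=q+p+1$ vertices and hence lands in $S_p$, not $S_{q+p}$; it is $\bar\sigma$, not $\tau$. Your subsequent claim that ``the first coordinate equals $\mu_{q_0,\dots,q_p}\bar\sigma$'' is then simply false: for generic $x$ the simplex $d_{q_0+1}\cdots d_{q+2p+1}\,x$ is not degenerate at all (take e.g.\ $p=1$, $q_0=1$, $q_1=0$, $x=(a,b,c,d,e)\in K^s_4$ with distinct vertices; your first entry is $(a,b,d)$, which is not in the image of $\mu_{1,0}=s_0$). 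The correct assignment is
\[
\bar\sigma=\text{(your second entry)}\in S_p,\qquad
\tau=\text{(your first entry)}\in S_{q+p},\qquad
\sigma=\mu_{q_0,\dots,q_p}\bar\sigma\in S_{q+p},
\]
and $\bar p(\gamma)=(\sigma,\tau,\bar\sigma)$. That $(\sigma,\tau)\in\St(S)_{q+p}$ is then exactly the statement that the two complementary index sets $\{i_k\}$ and $\{j_l\}$ give a pair of the shape demanded by Definition~\ref{4.2}, with common refinement $\gamma$ playing the role of $\sigma'$.

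Once you swap the entries this way, the rest of your plan --- commutation with $d^{(i)}_j,s^{(i)}_j,d_{(i)}$ via the simplicial identities, surjectivity by inserting degeneracies at the $p+1$ distinguished slots, and for $K^s$ the explicit inverse by interleaving the vertex strings of $\bar\sigma$ and $\tau$ --- matches the paper's argument.
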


\begin{proof}
1) Take an element
$\gamma\in{\bar{P}_pS_{q_0,...,q_p}}=S_{q_0+...+q_p+2p+1}$. Then
$\gamma$ and $q_0,...,q_p$ determine an element $\bar{p}(\gamma)$ in
$P_p\St(S)_{q_0,...,q_p}$ together with a $(p+1,q+p+1)$-partition
$(i_1,...,i_p,i_{p+1},j_1,...,j_{q+p+1})$ of $n=q+2p+1$, where
$q=q_0+...+q_p$. Here
\begin{eqnarray*}
i_1&=&q_0+1\\
i_2&=&q_0+q_1+3\\
&.&\\
&.&\\
&.&\\
i_p&=&q_0+...+q_{p-1}+2p-1\\
i_{p+1}&=&q_0+...+q_p+2p+1
\end{eqnarray*}
correspond to the $\mu_i$'s defined in Definition ~\ref{4.2} and
the $j$'s correspond to the complement, that is,
$j_1,...,j_{q_0+1},j_{q_0+2},...,j_{q_0+q_1+2},...,j_{q_0+...+q_{p-1}+p},...,j_{q_0+...+q_p+p+1}$,
are
$0,...,q_0,q_0+2,...,q_0+q_1+2,q_0+q_1+4,...,q_0+...+q_{p-2}+2p-2,...,q_0+...+q_{p-1}+2p,q_0+...+q_{p-1}+2p,...,
q_0+...+q_p+2p$, respectively. Then, in terms of Remark 2 at the end of Section 4, we define
\begin{displaymath}
\bar{p}(\gamma)=(\sigma,\tau,
\bar{\sigma})\in{P_p\text{St}(S)_{q_0...q_p}}\subseteq{S_{q+p}\times{S_{q+p}}\times{S_p}}
\end{displaymath}
where
\begin{eqnarray*}
\bar{\sigma}&=&d_{0...q_0}\circ{\hat{d}_{q_0+1}}\circ{...}
\circ{d_{(q_0+...+q_{p-1}+2p)...(q_0+...+q_p+2p)}}
\circ{\hat{d}_{q+2p+1}}(\gamma)=d_{j_1...j_{q+p+1}}(\gamma)\\
\tau&=&d_{q_0+1}\circ{d_{q_0+q_1+3}}\circ{...}
\circ{d_{q_0+...+q_p+2p+1}}(\gamma)=d_{i_1...i_{p+1}}(\gamma)\\
\sigma&=&\hat{s}_{q_0+...+q_p+p}\circ{s_{(q_0+...+q_p+p-1)...(q_0+...+q_{p-1}+p)}}\circ
\hat{s}_{q_0+...+q_{p-1}+p-1}\circ{...}\circ\\
& &\hat{s}_{q_0+q_1+1}\circ{s_{(q_0+q_1)...(q_0+1)}}\circ
\hat{s}_{q_0}\circ{s_{(q_0-1)...(0)}}(\bar{\sigma})\\
&=&s_{(q+p-1)...(q+p-q_p)}\circ{...}\circ{s_{(q_0+q_1)...(q_0+1)}}\circ{s_{(q_0-1)...(0)}}(\bar{\sigma})\\
&=&\mu_{q_0,...,q_p}(\bar{\sigma}).
\end{eqnarray*}
Using the above expression for $\bar{\sigma}$ in terms of $d$'s and $\gamma$, we
get
\begin{displaymath}
\sigma=s_{(q+p-1)...(q+p-q_p)}\circ{...}\circ{s_{(q_0+q_1)...(q_0+1)}}\circ{s_{(q_0-1)...0}}\circ{d_{j_1...j_{q+p+1}}}(\gamma).
\end{displaymath}
Now $\St(S)_{q_0+...+q_p+2p+1}$ contains the simplex
\begin{eqnarray*}
(s_{j_{q+p+1}...j_1}\circ{}s_{(q+p-1)...(q+p-q_p)}&\circ{...}&\circ{}s_{(q_0+q_1)
...(q_0+1)}
\circ{s_{(q_0-1)...0}}\circ{d_{j_1...j_{q+p+1}}}(\gamma),\\
s_{i_{p+1}...i_1}d_{i_1...i_{p+1}}(\gamma))&=&(s_{j_{q+p+1}...j_1}\sigma,s_{i_{p+1}...i_1}\tau).\\
\end{eqnarray*}

It follows that $(\sigma,\tau)\in{\St(S)}$ and hence $\bar{p}(\gamma)=(\sigma,\tau,\bar{\sigma})\in{P_p\St(S)_{q_0,...,q_p}}$.

Now $\bar{p}$ is a surjective map: Suppose
$(\sigma,\tau,\bar{\sigma})\in{P_p\St(S)_{q_0,...,q_p}}$ and we
shall find $\gamma\in{\bar{P}_p}S_{q_0,...,q_p}$ such that
$\bar{p}(\gamma)=(\sigma,\tau,\bar{\sigma})$. Thus
$(\sigma,\tau,\bar{\sigma})\in{P_p\St(S)_{q_0,...,q_p}}\subset{\delta(S\times{S})}_{q+p}$
is such that
\begin{displaymath}
\pi_1(\sigma,\tau)\in{\text{Im}\{\mu_{q_0,...,q_p}:S_p\to{S_{q+p}}\}}
\end{displaymath}
where
$\bar{\sigma}\in{S_p}$.

Again use the partition $(p+1,q+p+1)$ as above, put
$\gamma=s_{i_{p+1}...i_1}\sigma\in{S_{q+2p+1}}$. Indeed since
\begin{displaymath}
(s_{j_{q+p+1}}\circ{s_{j_q-q_p+p-1}}\circ{...}\circ{}s_{j_{q_0+q_1+2}}\circ{s_{j_{q_0+1}}}\sigma,s_{i_{p+1}...i_1}\tau)
\end{displaymath}
is of the required form as in Definition ~\ref{4.2} and since
\begin{displaymath}
(\sigma,\tau)=(d_{k_1....k_{p+1}}\times{}
d_{i_1...i_{p+1}})(s_{j_{q+p+1}}\circ{s_{j_q-q_p+p-1}}\circ{...}\circ{}s_{j_{q_0+q_1+3}}\circ{s_{j_{q_0+1}}}\sigma,s_{i_{p+1}...i_1}\tau)
\end{displaymath}
here
$d_{k_1....k_{p+1}}=d_{q_0+1}\circ{}d_{q_0+q_1+3}\circ{...}\circ{}d_{q_0+...+q_{p-1}+2p-1}\circ{}d_{q+2p+1}$.
So the $d_K$'s and the $d_I$'s are the same, where
$d_I=d_{i_1...i_{p+1}}$.

We have $(\sigma,\tau)\in{\St(S)}$. Hence
$\bar{p}(\gamma)\in{P_.\St(S)_.}$.

 2) If $S=K^s$, $K$
simplicial complex then
\begin{eqnarray*}
P_p\text{St}(K^s)_{q_0,\dots,q_p}=\{(\sigma,\tau)\in{\text{St}}(K^s)_{q+p}\subset{\delta}(K^s\times{K^s})_{q+p}\\
\text{
  }|\text{   }
\pi_1(\sigma,\tau)\in{\text{Im}}\{\mu_{q_0,\dots,q_p}:{K_p}^s\to{{K_{q+p}^s}}\}
\}.
\end{eqnarray*}

The map $\mu_{q_0,\dots,q_p}:{K_p}^s\to{{K_{q+p}^s}}$
takes $(i_0,...,i_p)$ to
$(\underbrace{i_0,\dots,i_0}_{q_0+1-\text{times}},\dots,
\underbrace{i_p,\dots,i_p}_{q_p+1-\text{times}})$. 
Then
\begin{eqnarray*}
\sigma&=&(a_{i_0},\dots,a_{i_0},\dots,a_{i_p},\dots,a_{i_p})\in{{K^s}_{q+p}},\\
\tau&=&(b_{j_0},\dots,b_{j_{q_0}},\dots,b_{j_{q_0+...+q_{p-1}+p}},\dots,b_{j_{q+p}})\in{{K^s}_{q+p}}.
\end{eqnarray*}
By the definition
$\bar{P}_p(K^s)_{q_0,\dots,q_p}=P_p(K^s)_{q_0+1,\dots,q_p+1}$. Then
$\gamma$ in ${K^s}_{q+2p+1}$ given by
$\gamma=(c_0,\dots,c_{q_0+1}|...|c_{q_0+...+q_{p-1}+2p},\dots,c_{q+2p+1})\in{K^s_{q+2p+1}}$
is uniquely determined by $\sigma$ and $\tau$.

Explicitly the inverse map
${\bar{p}}^{-1}:P_p\text{St}(K^s)_{q_0,\dots,q_p}\to{\bar{P}_p{K^s}_{q_0,\dots,q_p}}$
is defined by ${\bar{p}}^{-1}(\sigma,\tau)=\gamma$, where

\begin{eqnarray*}
\sigma&=&(a_{i_0},\dots,a_{i_p}),\\
\tau&=&(b_{j_0},\dots,b_{j_{q+p}})\text{   }\text{and}\\
\gamma&=&({j_0}^\prime,\dots,j_{q_0}^\prime,i_0|{j_{q_0+1}}^\prime,\dots,j_{q_0+q_1+1}^\prime,i_1|...|j_{q_0+...+q_{p-1}+p}^\prime,\dots,j_{q+p}^\prime,i_p)
\end{eqnarray*}
such that for
$\sum_{i=0}^{k-1}q_i+p\leq{s}\leq{\sum_{i=0}^kq_i+p}$

\begin{displaymath}
j_s^\prime= \left\{
\begin{array}{r@{\quad: \quad}l}i_{k-1} &
j_s\le{i_{k-1}} \\ j_s & i_{k-1}<j_s<i_k \\
i_k  & i_k\le{j_s},  \end{array} \right.
\end{displaymath}
$k=1,...,p$.
Hence $\gamma\in{\bar{P_p}}{K^s}_{q_0,...,q_p}$ exists and is uniquely determined by \linebreak
${(\sigma,\tau)}\in{\text{St}}(K^s)_{q+2p+1}$.

Therefore $\bar{p}:\bar{P_.}K^s\to{P_.\text{St}(K^s)}$ is an
isomorphism.

\end{proof}

{\bf Remark:} Note that $\bar{p}$ is not injective for a simplicial set in general
since for constructing the inverse map $P\text{St}(S)\to{\bar{P}S}$, there is no unique
choice for the element $\gamma$ in $\bar{P}_.S_.$. In fact, we do not know which
degeneracy operators we will use in order to define $\gamma$, so
in general the inverse is not well-defined.

\newpage

\medbreak
\section{\bf{The Classifying Space and Lattice Gauge Theory}\label{six}}

For the definition of a classifying map we need a prismatic version of the standard construction of the classifying space.

Let $G$ be a topological group and the usual classifying space $BG=EG/G$
which is constructed as a simplicial space
$EG_p=\underbrace{G_.\times{...}\times{G_.}}_{p+1-\text{times}}$,
$BG_p=(G\times{...}\times{G})/G$.

In order to make this simplicial set discrete we can replace $G$ by
the singular simplicial set of continuous maps
$S_qG=\text{Map}(\Delta^q,G)$ and $E_.S_.G$ as in Example 2.8. is a
prismatic set. However we shall need another model constructed as
follows: \vspace{0.5 cm} For a continuous map
$a\in{\text{Map}(\Delta^p\times{\Delta^{q_0...q_p}},G^{p+1})}$. Then
we define
\begin{displaymath}
a(t,s^0,...,s^p)=(a_0(t,s^0),a_1(t,s^0,s^1),...,a_p(t,s^0,...,s^p)),
\end{displaymath}
where
$(t,s^0,...,s^p)\in{\Delta^p\times{\Delta^{q_0...q_p}}}$.
$S_.G$ acts on this prismatic set and we define
\begin{eqnarray*}
P_pEG_{q_0,...,q_p}&=&\{a:\Delta^p\times{\Delta^{q_0...q_p}}\to{G^{p+1}}
\text{
  }|\text{   }
a_j(\varepsilon^it,s)\text{   }{\text{is independent of}}\\
& &\text{   } s^i\text{   }
 \text{for all}\text{   }j\text{   }\text{different from}\text{    }i
\}.
\end{eqnarray*}
$P_.BG=P_.EG/S_.G$, that is,
\begin{displaymath}
P_pBG_{q_0,...,q_p}=P_pEG_{q_0,...,q_p}/S_pG.
\end{displaymath}

\begin{proposition}
  \label{5.1}
  The evaluation maps give horizontal homotopy equivalences in the diagram

\begin{displaymath}
\xymatrix{ \|\text{   }|P_.EG_.|\text{   }\| \ar[d]^{\|\text{
}|\gamma|\text{   }\|} \ar[r]^{\phantom{123}ev}
&EG  \ar[d]^\gamma \\
\|\text{   }|P_.BG_.|\text{   }\| \ar[r]^{\phantom{123}ev} & EG/G }
\end{displaymath}
Furthermore the top map is equivariant with respect to the
homomorphism \linebreak
$ev:|S_.G|\to{G}$.
\end{proposition}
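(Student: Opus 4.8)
The statement to prove is Proposition~\ref{5.1}: the evaluation maps $ev$ give horizontal homotopy equivalences in the square relating $\|\,|P_.EG_.|\,\|$ and $\|\,|P_.BG_.|\,\|$ to $EG$ and $EG/G$, and the top map is equivariant for $ev\colon|S_.G|\to G$. My strategy is to reduce everything to the single statement that $ev\colon\|\,|P_.EG_.|\,\|\to EG$ is a homotopy equivalence and then to push it down through the (free, or at least sufficiently well-behaved) $|S_.G|$-action, resp.\ $G$-action. So the first step is to record carefully how $ev$ is defined: for $a\colon\Delta^p\times\Delta^{q_0\dots q_p}\to G^{p+1}$ in $P_pEG_{q_0,\dots,q_p}$ and a point $(t,s^0,\dots,s^p)$ in the prism, set $ev(t,s,a)=a(t,s^0,\dots,s^p)\in G^{p+1}=E_pG$, and then note this is compatible with all the face identifications (the defining condition on $P_pEG$ — that $a_j(\veps^i t,s)$ is independent of $s^i$ for $j\neq i$ — is exactly what makes $ev$ descend to the $d_k$-identifications and the $\pi_i$-identifications), so it induces a well-defined continuous map on the fat realization into $\|EG_\cdot\|\to EG$. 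Equivariance for the $|S_.G|$-action is then essentially a tautology from the way $S_.G$ acts on $P_.EG_.$ coordinatewise and on $EG$ by the standard action, so I would dispense with that in a sentence.

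**The core homotopy equivalence.** The heart of the matter is that $ev\colon\|\,|P_.EG_.|\,\|\to EG$ is a homotopy equivalence. Here I would exploit the prismatic-triangulation machinery of Section~\ref{three}: $P_.EG_.$ is a variant of the construction $P_.(f)$ applied to the map $E_.G\to *$ (or rather to $E_.S_.G\to *$), decorated with the ``continuous-map'' fattening of $G$ into $S_.G$. More precisely, there is a map $P_.EG_.\to P_.(E_.S_.G)$ (forgetting the continuity data beyond the vertices, or conversely evaluating), and by Corollary~\ref{composition} the realization $\lambda\colon\|\,|P_.(E_.S_.G)|\,\|\to |E_.S_.G|$ is a homotopy equivalence; combined with the standard fact that $|E_.S_.G|\simeq|S_.G|$-fattened $EG$ is contractible (this is the ``$\|\,|E_.S|\,\|$ is contractible'' remark at the end of Section~\ref{two}, with $X=|S_.G|$), and that $EG$ itself is contractible, one gets a zig-zag of homotopy equivalences between $\|\,|P_.EG_.|\,\|$ and $EG$ over which $ev$ is the comparison map. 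The cleanest route, I think, is a direct skeletal induction mimicking the proof of Theorem~\ref{3.4}: filter $\|\,|P_.EG_.|\,\|$ by the $\Delta^p$-skeleton, show $ev$ restricted over each open simplex of the base $\|*\|$ is (after a trivialization analogous to the Note following (\ref{3.1})) a projection off a contractible prism of mapping spaces, hence a homotopy equivalence, and assemble by the gluing lemma for homotopy equivalences of CW-pairs.

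**Passing to the quotient.** For the bottom row, I would argue that $|S_.G|$ acts on $\|\,|P_.EG_.|\,\|$ with quotient $\|\,|P_.BG_.|\,\|$ (this is the definition $P_pBG_{q_0,\dots,q_p}=P_pEG_{q_0,\dots,q_p}/S_pG$, compatibly realized) and that $G$ acts on $EG$ with quotient $EG/G$, and that $ev$ is equivariant over the homomorphism $|S_.G|\to G$. Since $|S_.G|\to G$ is itself a homotopy equivalence (evaluation at a vertex; $S_.G$ is the singular-type model and its realization is homotopy equivalent to $G$) and both actions are free with locally trivial (numerable principal) quotient maps, the square of quotient maps $EG\to EG/G$ and $\|\,|P_.EG_.|\,\|\to\|\,|P_.BG_.|\,\|$ are fibrations, and a map of fibrations that is a homotopy equivalence on total spaces and on the structure groups (compatibly) is a homotopy equivalence on the bases; this gives that the bottom $ev$ is a homotopy equivalence too.

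**Main obstacle.** The one genuinely delicate point is verifying that the two vertical maps really are principal bundles (or at least quasifibrations) so that ``equivalence on total spaces $\Rightarrow$ equivalence on base'' is licensed; the $|S_.G|$-action on the prismatic realization is free, but checking local triviality of $\|\,|P_.EG_.|\,\|\to\|\,|P_.BG_.|\,\|$ requires unwinding the realization cell by cell, and the cleanest fix is probably to observe that over each cell the action is the standard free diagonal $G^{p+1}$-type action twisted by mapping spaces, hence admits local sections, and then to invoke that geometric realization preserves such local triviality. An alternative that sidesteps this entirely is to run the contractibility/skeletal argument directly on $P_.BG_.$ as well, comparing it with $\|\,\|BS_.G\|\,\|$ and $BG$; but I expect the fibration-comparison route to be shorter to write. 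Either way, the bookkeeping of which face/degeneracy identifications the evaluation map respects is the part most likely to hide a sign or index error, so I would set that up with care at the very start.
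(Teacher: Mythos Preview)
Your approach is correct but works considerably harder than the paper does for the top horizontal map. The paper's argument is essentially two lines: since both $\|\,|P_.EG_.|\,\|$ and $EG$ are contractible (the latter by the standard bar construction, the former by the remark at the end of Section~\ref{two} that $\|\,|E_.X|\,\|$ is contractible, applied in the appropriate form), \emph{any} map between them is a homotopy equivalence---no skeletal induction, no comparison with $P_.(E_.S_.G)$, no invocation of Theorem~\ref{3.4} is needed. Your machinery would establish something stronger (an explicit cell-by-cell equivalence), but for the statement as written it is overkill.

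For the bottom map, both you and the paper run the same ``pass to the quotient'' move: equivariance of $ev$ along the homomorphism $ev\colon|S_.G|\to G$ (which is itself a homotopy equivalence) lets one descend the top equivalence to the base. The paper simply asserts that ``the evaluation map induces a homotopy equivalence on the quotient,'' whereas you correctly isolate the genuine content here---namely that the vertical maps must be principal bundles (or at least quasifibrations) for the comparison-of-fibrations argument to go through. Your proposed verification (free action, local sections cell by cell) is the right way to fill this in; the paper leaves it implicit. So your route is longer but more honest about where the work lies; the paper's route is shorter because it leans on contractibility of the total spaces and treats the quotient step as routine.
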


\begin{proof}
First notice that the evaluation map $ev:|S_.G|\to{G}$ is a
homotopy equivalence. Also the equivariance is obvious by the
commutative diagram
\begin{displaymath}
\xymatrix{\|\text{   }|P_.EG_.|\text{   }\| \times{|S_.G|}
\ar[d]^{{\text{pr}_1}} \ar[r]^{\phantom{1234567}ev\times{ev}}
&EG\times{G}  \ar[d]^{{\text{pr}_1}} \\
\|\text{   }|P_.EG_.|\text{   }\| \ar[r]^{\phantom{12}ev}
 & EG }
\end{displaymath}
Since $\|\text{   }|P_.EG_.|\text{   }\|$ and $EG$ are both
contractible, the evaluation map induces a homotopy equivalence on
the quotient.

\end{proof}

\newpage

\medbreak
\section{\bf{Lattice Gauge Theory, Parallel Transport Function}\label{six}}

In Lattice gauge theory in the sense of Phillips and Stone \cite{PS1} they construct for a given Lie group
$G$ and a simplicial complex $K$ a $G$-bundle with connection on $|K|$ associated to a set of $G$-valued
continuous functions defined over the faces of a simplex. These they call ``parallel transport functions''
since they are determined by parallel transport for the connection. In this section we shall introduce
similar ``compatible transition functions'' for $K$ replaced by a simplicial set $S$ and in the following
section we shall use these to construct a classifying map on the star complex $\bar{P_.}S_.$. First we
consider $G$-bundles over simplicial sets.

\begin{definition}
\label{6.2}
A bundle over $|S|$ is a sequence of bundles over
${\Delta}^p\times{\sigma}$ for all ${p}$, where $\sigma\in{S_p}$
and with commutative diagrams;
\begin{eqnarray*}
\xymatrix{
F_{{d}_j\sigma} \ar[d] \ar[r]^-{{\bar{\varepsilon}}^j}
&{F_{\sigma}}   \ar[d] \\
{\Delta}^{p-1}\times{{d}_j\sigma} \ar[r]^-{{\varepsilon}^j}
& {\Delta}^p\times{\sigma}
}
\end{eqnarray*}
and
\begin{eqnarray*}
\xymatrix{
F_{{s}_j\sigma} \ar[d] \ar[r]^-{{\bar{\eta}}^j}
&{F_{\sigma}}   \ar[d] \\
{\Delta}^{p+1}\times{{s}_j\sigma} \ar[r]^-{{\eta}^j}
& {\Delta}^p\times{\sigma}
}
\end{eqnarray*}
with the compatibility conditions:
\begin{eqnarray*}
{\bar{\varepsilon}}^j{\bar{\varepsilon}}^i= \left\{ \begin{array}{r@{\quad: \quad}l}{\bar{\varepsilon}}^i {\bar{\varepsilon}}^{j-1} & i<j  \\
{\bar{\varepsilon}}^{i+1}
{\bar{\varepsilon}}^j & i\geq{j}, \end{array} \right.
\end{eqnarray*}

\begin{eqnarray*}
{\bar{\eta}}^j{\bar{\eta}}^i= \left\{ \begin{array}{r@{\quad: \quad}l}{\bar{\eta}}^i {\bar{\eta}}^{j+1} & i\leq{j} \\
{\bar{\eta}}^{i-1}
{\bar{\eta}}^j & i>j, \end{array} \right.
\end{eqnarray*}
and
\begin{eqnarray*}
{\bar{\eta}}^j{\bar{\varepsilon}}^i= \left\{ \begin{array}{r@{\quad: \quad}l}{\bar{\varepsilon}}^i {\bar{\eta}}^{j-1} & i<j  \\
1 &i=j,i=j+1\\
{\bar{\varepsilon}}^{i-1}
{\bar{\eta}}^i & i>{j+1}. \end{array} \right.
\end{eqnarray*}
\vskip 0.3 cm Given a $G$-bundle $F\to{|S|}$, $G$ a Lie group, since
${\Delta}^p$ is contractible, we can choose a trivialization
${\varphi}_{\sigma}:F_{\sigma}\to{{\Delta}^p}\times{\sigma}\times{G}$
for a non-degenerate $\sigma\in{S_p}$. If $\sigma$ is degenerate,
that is, there exists $\tau$ such that $\sigma={s}_i\tau$, then the
trivialization of $\sigma$ is defined as pullback of the
trivialization of $\tau$, that is,
${\varphi}_{\sigma}={{\eta}^i}^*({\varphi}_{\tau})$.
\end{definition}

\begin{definition}
\label{6.3}
                    ( Admissible Trivializations )
A set of trivializations is called admissible, in case ${\varphi}_{\sigma}$ for $\sigma={s}_i\tau$ is given by
${\varphi}_{\sigma}={{s}^i}^*({\varphi}_{\tau})$.
\end{definition}

\begin{lemma}
\label{6.4}
Admissible trivializations always exist.

\end{lemma}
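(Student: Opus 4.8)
\textbf{Proof plan for Lemma \ref{6.4}.}
The plan is to build admissible trivializations by induction over the non-degenerate simplices of $S$, ordered by dimension, making the choices only on non-degenerate simplices and then \emph{forcing} the values on degenerate simplices by the rule ${\varphi}_{\sigma}={{s}^i}^*({\varphi}_{\tau})$ from Definition \ref{6.3}. Concretely, for each $p$ assume we have chosen trivializations ${\varphi}_{\tau}$ for all non-degenerate $\tau$ of dimension $<p$, and for each degenerate $\sigma$ of dimension $<p$ we have \emph{defined} ${\varphi}_{\sigma}$ by pulling back along the (iterated) degeneracy that writes $\sigma$ uniquely as ${s}_{i_1}\cdots {s}_{i_k}\tau$ with $\tau$ non-degenerate (the Eilenberg--Zilber lemma: every simplex of a simplicial set has a unique such representation). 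First I would check that this definition is consistent, i.e. that pulling back ${\varphi}_{\tau}$ along ${{s}^{i_1}}^*\cdots{{s}^{i_k}}^*$ is well-defined regardless of how one bookkeeps the composite, which follows from the simplicial identities for the $s_i$ together with the compatibility conditions ${\bar{\eta}}^j{\bar{\eta}}^i=\cdots$ imposed on the bundle in Definition \ref{6.2}.

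The inductive step is then: for a non-degenerate $\sigma\in{S_p}$, choose \emph{any} trivialization ${\varphi}_{\sigma}:F_{\sigma}\to{{\Delta}^p}\times{\sigma}\times{G}$ --- this is possible since ${\Delta}^p$ (in fact ${\Delta}^p\times\sigma$, $\sigma$ a point) is contractible, so any $G$-bundle over it is trivial --- and extend the definition to all degenerate simplices of dimension $p$ by the forced rule. Since a degenerate $p$-simplex $\sigma={s}_{i_1}\cdots{s}_{i_k}\tau$ has its unique non-degenerate part $\tau$ of dimension $p-k<p$, its trivialization was already pinned down at an earlier stage; there is nothing to choose, only something to verify, namely that the new values are still mutually consistent and compatible with the old ones under the remaining structure maps $\bar\varepsilon^j$. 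I would verify the relation ${\varphi}_{\sigma}={{s}^i}^*({\varphi}_{\tau})$ whenever $\sigma={s}_i\tau$ --- including the case where $\tau$ is itself degenerate --- by reducing $\tau$ to its non-degenerate core and invoking the simplicial identity $s_i s_j = s_{j+1}s_i$ ($i\le j$) to see that the two iterated-degeneracy presentations agree, so the two pullbacks agree by the cocycle-type conditions on the $\bar\eta^i$.

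The one genuine point to be careful about --- and the main obstacle --- is the interaction between the forced $\bar\eta$-rule and the \emph{face} maps $\bar\varepsilon^j$: a priori Definition \ref{6.2} only asks for the existence of isomorphisms $\bar\varepsilon^j:F_{d_j\sigma}\to F_\sigma$ covering $\varepsilon^j$, not that the chosen trivializations be compatible with them, so admissibility is purely a statement about degeneracies and imposes no constraint in the face direction. Thus the construction never has to "go back" and re-choose anything, and the induction closes: one simply has to confirm that the mixed identity ${\bar{\eta}}^j{\bar{\varepsilon}}^i$ (with its three cases $i<j$, $i=j$ or $i=j+1$, $i>j+1$) is consistent with having defined ${\varphi}_{{s}_j\sigma}$ as a pullback --- in the cases $i=j,\ i=j+1$ this says $\bar\eta^j$ followed by the face $\bar\varepsilon^i$ is the identity, which is exactly what makes the pulled-back trivialization restrict correctly, and in the remaining cases it is a bookkeeping check using the already-established degeneracy identities.

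\textbf{Summary.} Choose trivializations arbitrarily on non-degenerate simplices; define them on degenerate simplices by the Eilenberg--Zilber unique-degeneracy presentation and pullback; the simplicial identities plus the $\bar\eta$-compatibility conditions of Definition \ref{6.2} guarantee this is well-defined and admissible by construction.
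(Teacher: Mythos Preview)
The paper does not actually supply a proof of Lemma \ref{6.4}; it is stated and the text moves directly on to Definition \ref{6.5} (the surrounding propositions are later declared ``straight forward'' with details deferred to \cite{B}). So there is nothing in the paper to compare against line by line.

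Your approach is correct and is the standard one. The Eilenberg--Zilber lemma giving the unique presentation $\sigma = s_{i_k}\cdots s_{i_1}\tau$ with $\tau$ non-degenerate and $i_1<\cdots<i_k$ is exactly the right tool: it lets you make free choices on non-degenerate simplices and then forces everything else, and the uniqueness of the normal form together with the $\bar\eta$-identities of Definition \ref{6.2} is precisely what guarantees that the forced definition satisfies $\varphi_{s_i\tau}=(\eta^i)^*\varphi_\tau$ for \emph{every} presentation of a degenerate simplex, not just the normal one. Your observation that admissibility imposes no condition in the face direction is also the key point that makes the induction close without backtracking. The paragraph on the mixed identities $\bar\eta^j\bar\varepsilon^i$ is not strictly needed for the lemma as stated --- admissibility is a condition on degeneracies only --- but it does no harm.
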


Now, let us construct the transition functions for a simplex $\sigma\in{S_p}$ before giving the following proposition:

\begin{definition}
\label{6.5}

Given a bundle and a set of trivializations, we get for each face ${\tau}$ of say
dim$\tau=q<p$ in $\sigma$, a transition function
$v_{\sigma,\tau}:{\Delta}^q\to{G}$.
E.g., if $\tau={d}_i\sigma$ then the transition function $v_{\sigma,{d}_i\sigma}:{\Delta}^{p-1}\to{G}$ is given by the diagram
\begin{displaymath}
\xymatrix{
{\Delta}^{p-1}\times{({d}_i\sigma)}\times{G} \ar[d]
\ar[r]^-{\Theta}
&{\Delta}^p\times{(\sigma)}\times{G}   \ar[d] \\
{\Delta}^{p-1}\times{{d}_i\sigma} \ar[r]^-{{\varepsilon}^i}
& {\Delta}^p\times{\sigma}
}
\end{displaymath}
where ${d}_i\sigma=\tau$ and $\Theta={\varphi}_{\sigma}\circ{{\bar{\varepsilon}}^i}\circ{{{\varphi}_{{d}_i\sigma}}^{-1}}$.
So
\begin{displaymath}
\{v_{\sigma,\tau}|\sigma\in{S_p}\text{   }\text{and} \text{   }\tau\text{   }\text{is a face of}\text{   }\sigma\}
\end{displaymath}
are {\bf the transition functions} for the bundle over $|S|$.
\end{definition}

{\bf Remark :}
The transitions functions are generalized lattice gauge fields. Classically Lattice gauge fields are defined only on 1-skeletons
but one can extend them to $p-1$
simplices for all $p$, given rise to transition functions on $\Delta^p$, as above.

We now list a number of propositions stating the properties of these. The proofs are straight forward. For details see Akyar \cite{B}.

\begin{proposition}
\label{6.6}

Given a bundle on a simplicial set and admissible trivializations, the transition function $v_{\sigma,\tau}$, where
$\tau$ is a face of $\sigma$, satisfies;

i) $\sigma$ is nondegenerate:
if $\gamma={d}_j\sigma$ and $\tau={d}_i\gamma$ then
\begin{displaymath}
v_{\sigma,\tau}=(v_{\sigma,\gamma}\circ{{\varepsilon}^i}).v_{\gamma,\tau}.
\end{displaymath}

This is called the cocycle condition.

ii) $\sigma$ is degenerate:
If $\sigma={s}_j{\sigma}^{\prime}$ and $\tau={d}_i\sigma$ then when $i<j$ for $\tau={s}_{j-1}{\tau}^{\prime}$ one gets
${\tau}^{\prime}={d}_i{ \sigma}^{\prime}$ and  when $i>j+1$ for $\tau={s}_j{\tau}^{\prime}$
one gets ${\tau}^{\prime}={d}_{i-1}{ \sigma}^{\prime}$. For the other cases, $i=j$ or $i=j+1$, $\tau={\sigma}^{\prime}$.
Then the transition functions satisfy:
\begin{displaymath}
v_{\sigma,\tau}= \left\{ \begin{array}{r@{\quad: \quad}l}v_{{\sigma}^{\prime},{\tau}^{\prime}}\circ{{\eta}^{j-1}} & i<j  \\
1 &i=j,i=j+1\\
v_{{\sigma}^{\prime},{\tau}^{\prime}}\circ{{\eta}^j}
& i>{j+1}. \end{array} \right.
\end{displaymath}

iii) If $\tau$ is a composition of face operators of $\sigma$, e.g., $\tau={\tilde{d}}^{p-(i-1)}\sigma$, $i=1,...,p$, where
${\tilde{d}}^{p-(i-1)}=d_i\circ{...}\circ{d_p}$ then
\begin{displaymath}
v_{\sigma,\tau}=(v_{\sigma,{\tilde{d}}^1\sigma}\circ{{({\varepsilon}^i)}^{p-i}}).
(v_{{\tilde{d}}^1\sigma,{\tilde{d}}^2\sigma}\circ{{({\varepsilon}^i)}^{p-i-1}})...
(v_{{\tilde{d}}^{p-(i+1)}\sigma,{\tilde{d}}^{p-i}\sigma}\circ{{\varepsilon}^i}).v_{{\tilde{d}}^{p-i}\sigma,\tau}.
\end{displaymath}
\end{proposition}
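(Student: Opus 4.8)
The plan is to prove the three assertions by unwinding the definition of the transition functions $v_{\sigma,\tau}$ from Definition~6.5 and using the compatibility conditions for the bundle in Definition~6.2 together with the admissibility of the chosen trivializations (Definition~6.3). Throughout I write $\Theta_{\sigma,d_i\sigma}=\varphi_\sigma\circ\bar\varepsilon^i\circ\varphi_{d_i\sigma}^{-1}$ for the bundle map over $\varepsilon^i:\Delta^{p-1}\times d_i\sigma\to\Delta^p\times\sigma$, so that $v_{\sigma,d_i\sigma}$ is the $G$-component of $\Theta_{\sigma,d_i\sigma}$, and more generally for an iterated face $\tau$ of $\sigma$ I take $\Theta_{\sigma,\tau}$ to be the corresponding composite bundle map and $v_{\sigma,\tau}$ its $G$-component. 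The key algebraic observation is that composing two such bundle maps multiplies the $G$-components, after precomposing the first factor with the appropriate face inclusion of simplices — this is exactly the chain rule for the cocycle data, and it is forced by the uniqueness of the trivializations up to the transition function.

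For (i), $\sigma$ nondegenerate with $\gamma=d_j\sigma$ and $\tau=d_i\gamma$: here $\tau$ is obtained from $\sigma$ by the composite face map, and by the simplicial identities the relevant inclusion of simplices factors as $\varepsilon^j\circ\varepsilon^i$ (up to the standard reindexing). I would compute $\Theta_{\sigma,\tau}=\Theta_{\sigma,\gamma}\circ\Theta_{\gamma,\tau}$ from the defining square, read off the $G$-components, and note that the $G$-component of $\Theta_{\sigma,\gamma}$, when restricted along $\varepsilon^i$ to $\Delta^q\times\tau$, becomes $v_{\sigma,\gamma}\circ\varepsilon^i$; multiplying by the $G$-component $v_{\gamma,\tau}$ of the second factor gives the stated cocycle formula. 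For (ii), $\sigma=s_j\sigma'$ degenerate: by admissibility, $\varphi_\sigma=(s^j)^*\varphi_{\sigma'}$, i.e.\ the trivialization over $\sigma$ is pulled back along $\eta^j$. I would substitute this into the definition of $v_{\sigma,d_i\sigma}$, use the last compatibility condition of Definition~6.2 (the one reading $\bar\eta^j\bar\varepsilon^i=\bar\varepsilon^i\bar\eta^{j-1}$, $=1$, or $\bar\varepsilon^{i-1}\bar\eta^i$ according to the three cases $i<j$, $i=j$ or $i=j+1$, $i>j+1$), and match $d_i\sigma$ with $s_{j-1}\tau'$, $\sigma'$, or $s_j\tau'$ respectively via the simplicial identities $d_is_j=s_{j-1}d_i$, $\mathrm{id}$, $s_jd_{i-1}$. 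In the middle cases $\bar\eta^j\bar\varepsilon^i$ is the identity, so $v_{\sigma,\tau}=1$; in the outer cases the pullback of $\varphi_{\sigma'}$'s transition function along $\eta^{j-1}$ (resp.\ $\eta^j$) yields $v_{\sigma',\tau'}\circ\eta^{j-1}$ (resp.\ $v_{\sigma',\tau'}\circ\eta^j$). Finally (iii) is just (i) applied repeatedly: write $\tau=\tilde d^{p-i}\sigma$ as the result of $p-i$ successive one-step faces $d_p,\dots,d_i$, apply the cocycle condition of (i) at each stage to peel off one factor, and collect the resulting telescoping product, keeping careful track of how many times each intermediate $v$ must be precomposed with $\varepsilon^i$ — at the $k$-th stage the already-accumulated factor is restricted once more along $\varepsilon^i$, which accounts for the exponents $(\varepsilon^i)^{p-i},(\varepsilon^i)^{p-i-1},\dots,\varepsilon^i,\mathrm{id}$ in the statement.

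The routine but slightly delicate bookkeeping is the reindexing of face and degeneracy operators when one passes between ``$d_i$ of $\gamma=d_j\sigma$'' and ``a single composite face of $\sigma$'': the simplicial identities shift indices, and one must be consistent about whether $\varepsilon^i$ refers to the face of $\Delta^p$ or of $\Delta^{q}$. I expect the main obstacle to be precisely organizing case (ii): one has to check that, for each of the four sub-cases on the pair $(i,j)$, the face $d_i\sigma$ is identified with the claimed expression $s_{j-1}\tau'$, $\tau'$, or $s_j\tau'$, \emph{and} that under this identification the admissible trivialization $\varphi_\sigma=(\eta^j)^*\varphi_{\sigma'}$ pulls the transition function back along exactly $\eta^{j-1}$ or $\eta^j$ as claimed — this is where the compatibility diagram $\bar\eta^j\bar\varepsilon^i$ of Definition~6.2 does the real work, and getting the three instances of that diagram to line up with the three displayed cases is the crux. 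Once case (ii) is settled, (i) is a direct diagram chase and (iii) is a formal induction, so I would present (i) first, then (ii), then deduce (iii); the reader is referred to Akyar~\cite{B} for the full index-level verification.
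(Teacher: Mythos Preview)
Your proposal is correct and in fact supplies considerably more detail than the paper itself: the paper gives no proof of Proposition~\ref{6.6} beyond the remark ``The proofs are straight forward. For details see Akyar~\cite{B}.'' Your outline --- proving (i) by factoring $\Theta_{\sigma,\tau}=\Theta_{\sigma,\gamma}\circ\Theta_{\gamma,\tau}$ and reading off $G$-components, proving (ii) by combining admissibility $\varphi_\sigma=(\eta^j)^*\varphi_{\sigma'}$ with the compatibility relations $\bar\eta^j\bar\varepsilon^i$ of Definition~\ref{6.2}, and deducing (iii) by iterating (i) --- is exactly the kind of straightforward unwinding the paper has in mind, and your closing reference to~\cite{B} matches the paper's own deferral.
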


\begin{proposition}
\label{6.7}

Assume that we have a bundle over $|S|$. Then

1) There exists admissible trivializations
such that the transition function is given by
\begin{displaymath}
v_{\sigma,{d}_i\sigma}=1\text{    }\text{if}\text{   }i<p.
\end{displaymath}

2) For $\tau={\tilde{d}}^{p-(i-1)}\sigma$, $i=1,...,p$,
we get $v_{\sigma,\tau}$ as product of some transition functions:
\begin{displaymath}
v_{\sigma,\tau}=(v_{\sigma}\circ{{({\varepsilon}^i)}^{p-i}}).
(v_{{\tilde{d}}^1\sigma}\circ{{({\varepsilon}^i)}^{p-i-1}}).(v_{{\tilde{d}}^2\sigma}\circ{{({\varepsilon}^i)}^{p-i-2}})...
(v_{{\tilde{d}}^{p-(i+1)}\sigma}\circ{{({\varepsilon}^i)}^1}).(v_{{\tilde{d}}^{p-i}\sigma}).
\end{displaymath}

3) The transition functions $v_{\sigma,\tau}$ satisfy the compatibility
conditions:
\begin{displaymath}
v_{\sigma}\circ{{\varepsilon}^i}= \left\{ \begin{array}{r@{\quad: \quad}l}v_{{d}_i\sigma}
 & i<p-1  \\ v_{{d}_{p-1}\sigma}.{v_{{d}_p\sigma}}^{-1}
 & i={p-1} \end{array} \right.
\end{displaymath}

4) For a degenerate $\sigma$, we have
\begin{displaymath}
v_{\sigma}\circ{{\eta}^j}=v_{{s}_j\sigma}
\end{displaymath}    $\forall {j}$.
\end{proposition}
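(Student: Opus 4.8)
\textbf{Proof proposal for Proposition \ref{6.7}.}

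The plan is to build on Proposition \ref{6.6} by a bookkeeping argument over the skeleta, progressively normalizing the trivializations. First I would produce the admissible trivializations of part (1): starting from arbitrary admissible trivializations (which exist by Lemma \ref{6.4}), I would modify them skeleton by skeleton. For a non-degenerate $\sigma \in S_p$, replacing $\varphi_\sigma$ by $g \cdot \varphi_\sigma$ for a suitable continuous $g : \Delta^p \to G$ changes the transition functions $v_{\sigma, d_i\sigma}$ by $g \circ \varepsilon^i$ (up to the already-fixed factors from lower skeleta). Since $\Delta^p$ is contractible and the $d_i\sigma$ for $i < p$ form a sub-horn of $\partial \Delta^p$ that is itself contractible, I can solve inductively for $g$ so that $v_{\sigma, d_i\sigma} = 1$ for all $i < p$; one must check that the prescriptions on the various faces $d_i\sigma$, $i<p$, are consistent on their overlaps, which follows from the cocycle condition of Proposition \ref{6.6}(i). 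The degenerate simplices are then forced, and admissibility is automatically preserved by construction.

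Next, write $v_\sigma := v_{\sigma, d_p\sigma}$ for the one remaining (potentially nontrivial) transition function of a non-degenerate $\sigma$. Part (2) is then a direct specialization of Proposition \ref{6.6}(iii): in the product formula for $v_{\sigma,\tau}$ with $\tau = \tilde d^{p-(i-1)}\sigma$, every factor of the form $v_{\gamma, \tilde d^1\gamma \cdots}$ with the ``wrong'' face index collapses to $1$ by part (1), leaving exactly the telescoping product of the $v_\gamma$'s composed with iterated $\varepsilon^i$'s as claimed. For part (3), I would take $\tau = d_i\sigma$ and compare $v_{\sigma, d_i\sigma}$ computed two ways: on one hand it is $1$ for $i < p$ by part (1); on the other hand, using the cocycle condition with an intermediate face. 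The key case is $i = p-1$: here one expresses $v_{d_{p-1}\sigma, (\text{top face})}$ via the identity $d_p d_{p-1} = d_{p-1} d_p$ and the cocycle relation, and the mismatch between $v_{d_{p-1}\sigma}$ and $v_{d_p\sigma}$ produces the factor $v_{d_{p-1}\sigma} \cdot v_{d_p\sigma}^{-1}$. Part (4) is immediate from Proposition \ref{6.6}(ii) (the $i > j+1$ branch, or directly from admissibility of the trivialization of $s_j\sigma$).

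The main obstacle I anticipate is part (1): making the inductive normalization of trivializations precise requires care that the modification on the $p$-skeleton does not disturb the already-achieved normalization on the $(p-1)$-skeleton, and that the boundary data prescribed on the faces $d_0\sigma, \dots, d_{p-1}\sigma$ glue to a continuous map on a contractible subspace of $\partial\Delta^p$. This is a standard obstruction-theoretic/extension argument — extend over a contractible sub-horn, then over the interior — but it is the only genuinely non-formal step; parts (2)--(4) are then essentially substitution into Proposition \ref{6.6}. I would also double-check the precise index ranges ($i < p$ versus $i \le p-1$, and the split at $i = p-1$ in part (3)) since the simplicial identities shift indices and the placement of the single nontrivial face matters.
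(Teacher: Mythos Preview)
The paper does not give a proof of this proposition: immediately before it (after Proposition \ref{6.6}) the authors write ``We now list a number of propositions stating the properties of these. The proofs are straight forward. For details see Akyar \cite{B}.'' So there is nothing in the paper to compare your argument against beyond that remark.

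Your proposal is a correct and natural way to fill in the details. The skeleton-by-skeleton normalization in part (1) via the horn $\Lambda^p_p=\bigcup_{i<p}\varepsilon^i(\Delta^{p-1})$ is exactly the right mechanism, and your observation that the prescriptions on overlapping faces agree because, by induction, $v_{d_i\sigma,d_{j-1}d_i\sigma}=1=v_{d_j\sigma,d_id_j\sigma}$ for $i<j<p$, is the point that makes the extension go through. Parts (2)--(4) are, as you say, substitution into Proposition \ref{6.6}; your computation for $i=p-1$ in part (3) via the two cocycle expressions for $v_{\sigma,d_{p-1}d_p\sigma}$ (once through $d_p\sigma$, once through $d_{p-1}\sigma$) is the standard route. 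One small wording issue: in part (4) the statement is about $s_j\sigma$ being degenerate, not about $\sigma$ itself, so the relevant branch of Proposition \ref{6.6}(ii) is $i=p+1>j+1$ (for $j<p$), giving $v_{s_j\sigma,d_{p+1}s_j\sigma}=v_{\sigma,d_p\sigma}\circ\eta^j$; you may want to check the boundary case $j=p$ separately.
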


\begin{proposition}
\label{6.8}

 Given a bundle, one can find admissible trivializations such that the transition functions are determined by functions
$v_\sigma:{\Delta}^{p-1}\to{G}$ for $\sigma\in{S_p}$
nondegenerate.

\end{proposition}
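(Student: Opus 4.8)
The plan is to start from the admissible trivializations produced by Lemma~\ref{6.4} and then successively modify them over the skeleta of $S$ so as to achieve the normalization $v_{\sigma,d_i\sigma}=1$ for $i<p$, which by part~2) of Proposition~\ref{6.7} forces all transition functions to be determined by the single family $v_\sigma:\Delta^{p-1}\to G$ for nondegenerate $\sigma\in S_p$. So the statement is essentially a repackaging of Proposition~\ref{6.7}, and the real content is establishing part~1) of that proposition; I will carry out that argument.

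First I would fix, for each nondegenerate $\sigma\in S_p$, the transition functions $v_{\sigma,d_i\sigma}$ coming from a choice of admissible trivializations (Definition~\ref{6.5}, Lemma~\ref{6.4}), and observe that changing the trivialization $\varphi_\sigma$ by a map $g_\sigma:\Delta^p\to G$ changes $v_{\sigma,d_i\sigma}$ to $g_\sigma\circ\varepsilon^i$ times $v_{\sigma,d_i\sigma}$ times $(g_{d_i\sigma})^{-1}$ (up to the conventions of Definition~\ref{6.5}). The idea is to choose the $g_\sigma$ inductively on $p$ so that the faces $d_0\sigma,\dots,d_{p-1}\sigma$ get "trivial" transition functions while the face $d_p\sigma$ absorbs the discrepancy. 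Concretely, given $\sigma\in S_p$ nondegenerate, since $\Delta^p$ retracts onto the closed face opposite the last vertex, I can extend the already-chosen data on $d_0\sigma,\dots,d_{p-1}\sigma$ to a map $g_\sigma:\Delta^p\to G$ with $g_\sigma\circ\varepsilon^i = v_{\sigma,d_i\sigma}$ for $i<p$; redefining $\varphi_\sigma$ by $g_\sigma$ then yields $v_{\sigma,d_i\sigma}=1$ for $i<p$. One must check this is consistent with the cocycle condition Proposition~\ref{6.6}(i) on lower faces and compatible with degeneracies (Proposition~\ref{6.6}(ii), Proposition~\ref{6.7}(4)) so that admissibility is preserved; the degeneracy case is handled by the formula $v_{s_j\sigma,\tau}$ in Proposition~\ref{6.6}(ii), which shows that once the nondegenerate simplices are normalized, the degenerate ones are automatically consistent.

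Having achieved $v_{\sigma,d_i\sigma}=1$ for $i<p$, I set $v_\sigma := v_{\sigma,d_p\sigma}:\Delta^{p-1}\to G$. It remains to show every $v_{\sigma,\tau}$, for $\tau$ an arbitrary face of $\sigma$, is expressed through the $v_\bullet$'s: this is exactly Proposition~\ref{6.6}(iii) combined with Proposition~\ref{6.7}(2), writing $\tau$ as an iterated face $\tilde d^{p-(i-1)}\sigma=d_i\circ\cdots\circ d_p\,\sigma$ and expanding via the cocycle condition; all factors $v_{\bullet,d_j\bullet}$ with $j$ not maximal are $1$, so only the $v_\bullet$'s survive. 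This gives the asserted description.

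The main obstacle is the inductive extension step: one needs to verify that the $g_\sigma$ can be chosen coherently across all simplices at once — i.e. that the choices on a $p$-simplex's faces, which come from the already-completed $(p-1)$-stage, glue to a well-defined continuous map on $\partial\Delta^p$ minus the last face, that this extends over $\Delta^p$ (using contractibility of $\Delta^p$ rel that subcomplex, which is where the Lie group / local-triviality hypothesis and a partition-of-unity or obstruction argument enter), and that the resulting new trivializations remain \emph{admissible} in the sense of Definition~\ref{6.3}. The bookkeeping with the cocycle identities of Proposition~\ref{6.6} to see that nothing obstructs the extension is the delicate part; since the paper explicitly says "the proofs are straightforward" and refers to Akyar~\cite{B}, I would present the extension argument in outline and cite \cite{B} for the detailed verification of compatibility and admissibility.
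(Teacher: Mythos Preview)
Your approach is correct and is essentially what the paper does---except that the paper does not actually prove this proposition in the text: immediately before Proposition~\ref{6.6} the authors write ``The proofs are straight forward. For details see Akyar~\cite{B},'' and Propositions~\ref{6.6}--\ref{6.10} are all stated without argument. Your outline is therefore already more detailed than the paper, and you have correctly identified that the substance of Proposition~\ref{6.8} is Proposition~\ref{6.7}(1) together with the reduction formula of Proposition~\ref{6.7}(2), with the inductive gauge-fixing over the skeleta being the standard way to establish~\ref{6.7}(1).

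One small point worth tightening: in the inductive step the condition you want on the gauge change is $g_\sigma\circ\varepsilon^i = (v_{\sigma,d_i\sigma})^{-1}$ (up to convention), since the $v_{\sigma,d_i\sigma}$ you are using at stage $p$ already incorporate the modified trivializations on the faces $d_i\sigma$ from the previous stage; there is no residual $g_{d_i\sigma}$ factor. The compatibility of these prescribed boundary values on the horn $\bigcup_{i<p}\varepsilon^i(\Delta^{p-1})$ is exactly what the cocycle condition of Proposition~\ref{6.6}(i) gives once the inductive hypothesis $v_{d_j\sigma,d_i d_j\sigma}=1$ (for $i<p-1$) is in force, and the extension to $\Delta^p$ is then automatic since that horn is a retract of the simplex. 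With this correction your sketch is complete and matches the intended argument referred to~\cite{B}.
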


\begin{proposition}
\label{6.9}

Suppose given a set of functions
\begin{displaymath}
v_{\sigma}:{\Delta}^{p-1}\to{G}
\end{displaymath}
for $\sigma\in{S_p}$ for all ${p}$, satisfying the compatibility
conditions
\begin{displaymath}
v_{\sigma}\circ{{\varepsilon}^i}= \left\{ \begin{array}{r@{\quad: \quad}l}v_{{d}_i\sigma}
 & i<p-1  \\ v_{{d}_{p-1}\sigma}.{v_{{d}_p\sigma}}^{-1}
 & i={p-1} \end{array} \right.
\end{displaymath}
and
\begin{displaymath}
v_{s_j\sigma}=v_{\sigma}\circ{\eta^j}.
\end{displaymath}

Then one can define for each $\sigma\in{S_p}$ and each lower dimensional face $\tau$ of $\sigma$, a function
$v_{\sigma,\tau}$ such that i) and ii)
in Proposition \ref{6.6} hold and such that
\begin{displaymath}
v_{\sigma,\tau}= \left\{ \begin{array}{r@{\quad: \quad}l}v_{\sigma} & i=p  \\
1 & i<p. \end{array} \right.
\end{displaymath}

\end{proposition}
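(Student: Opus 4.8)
The plan is to promote the cocycle rule~i) of Proposition~\ref{6.6} to a recursive \emph{definition} of the functions $v_{\sigma,\tau}$, with the values prescribed on codimension-one faces as the base case, and then to verify that this definition is consistent and has the two required properties. Thus I first set, for $\sigma\in S_p$,
\begin{displaymath}
v_{\sigma,d_i\sigma}=\begin{cases}v_\sigma,& i=p,\\ 1,& i<p,\end{cases}
\end{displaymath}
which makes the last clause of the proposition hold by construction. For a general face $\tau$ of $\sigma$ of codimension $c>1$, I pick a maximal chain $\sigma=\gamma_0,\gamma_1,\dots,\gamma_c=\tau$ in which each $\gamma_k$ is a codimension-one face of $\gamma_{k-1}$, say $\gamma_k=d_{i_k}\gamma_{k-1}$, and I \emph{define} $v_{\sigma,\tau}$ to be the product of the $v_{\gamma_{k-1},\gamma_k}$, each precomposed with the $\varepsilon$-maps coming from the later steps of the chain, that is forced by iterating~i); in the special case in which $\tau$ is an iterated ``top'' face this is exactly the product written down in Proposition~\ref{6.7}(2)--(3).

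The essential point is that $v_{\sigma,\tau}$ is independent of the chosen maximal chain. Any two maximal chains from $\sigma$ to $\tau$ are joined by a sequence of elementary moves, each of which replaces a segment $\gamma_{k-1},\gamma_k,\gamma_{k+1}$ by the other factorization of the step $\gamma_{k-1}\to\gamma_{k+1}$ through a codimension-one face, and corresponds to a simplicial identity $d_id_j=d_{j-1}d_i$ with $i<j$. So it suffices to show that one such move leaves the product unchanged, i.e.\ that deleting two vertices in positions $a<b$ from an intermediate simplex $\gamma$ in either order produces the same contribution. If neither deleted vertex is the top vertex of the simplex it is removed from, then all the transition functions occurring in the two orders equal $1$ and there is nothing to check; in the remaining cases (where $b$ is the top vertex of $\gamma$) one is reduced precisely to the two identities
\begin{displaymath}
v_\gamma\circ\varepsilon^a=v_{d_a\gamma}\quad(a<\dim\gamma-1),\qquad v_\gamma\circ\varepsilon^{\dim\gamma-1}=v_{d_{\dim\gamma-1}\gamma}\cdot v_{d_{\dim\gamma}\gamma}^{-1},
\end{displaymath}
which are exactly the two branches of the compatibility hypothesis. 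Hence the definition is unambiguous.

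Once $v_{\sigma,\tau}$ is well defined, property~i) holds essentially by construction, since concatenating a chain through $\gamma=d_j\sigma$ with a chain from $\gamma$ to $\tau$ is itself one of the admissible chains computing $v_{\sigma,\tau}$, and reading off the first step gives the asserted formula; an induction on the codimension then also yields the more general multiplicativity~iii) of Proposition~\ref{6.6}. For property~ii) I would bring in the remaining hypothesis $v_{s_j\sigma}=v_\sigma\circ\eta^j$ together with the simplicial identities for $d_is_j$ and the cosimplicial identities relating $\varepsilon^i$ and $\eta^j$: for $\sigma=s_j\sigma'$ and $\tau=d_i\sigma$ one rewrites the defining chain for $v_{\sigma,\tau}$ using $d_is_j=s_{j-1}d_i$ when $i<j$ (resp.\ $d_is_j=s_jd_{i-1}$ when $i>j+1$), which converts the product into $v_{\sigma',\tau'}\circ\eta^{j-1}$ (resp.\ $v_{\sigma',\tau'}\circ\eta^{j}$) with $\tau'=d_i\sigma'$ (resp.\ $\tau'=d_{i-1}\sigma'$); and in the cases $i=j$ and $i=j+1$, where $\tau=\sigma'$, the identity $\eta^j\varepsilon^i=\id$ collapses the relevant factor so that $v_{\sigma,\tau}=1$.

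The step I expect to be the main obstacle is the consistency check of the second paragraph --- not because the idea is subtle but because of the bookkeeping: one must keep exact track of which $\varepsilon$-maps get precomposed with which factor as lower vertices are successively deleted, and one must handle with care the situation where an intermediate simplex $\gamma$ is degenerate, so that $d_{\dim\gamma-1}\gamma=d_{\dim\gamma}\gamma$ and the $a=\dim\gamma-1$ branch of the compatibility hypothesis has right-hand side $1$. With these verifications in place, $v_{\sigma,\tau}$ is the required extension.
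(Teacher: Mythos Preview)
The paper does not actually prove Proposition~\ref{6.9}: after stating Proposition~\ref{6.6} it says ``The proofs are straight forward. For details see Akyar \cite{B}'' and then lists Propositions~\ref{6.6}--\ref{6.10} without argument. So there is no in-paper proof to compare against.

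That said, your approach is the natural one and is essentially what such a ``straight forward'' proof must be. Promoting the cocycle rule~i) to a recursive definition along maximal chains of faces, with the codimension-one values as base case, is the only reasonable construction; and your reduction of the well-definedness to the elementary moves coming from $d_id_j=d_{j-1}d_i$ is correct. In particular, your case analysis---that when neither deleted vertex is the current top vertex all factors are $1$, and that when the top vertex is involved the two orders match precisely by the two branches of the compatibility hypothesis $v_\sigma\circ\varepsilon^i=v_{d_i\sigma}$ (for $i<p-1$) and $v_\sigma\circ\varepsilon^{p-1}=v_{d_{p-1}\sigma}\cdot v_{d_p\sigma}^{-1}$---is exactly the computation that makes the construction go through, and it is the heart of the matter.

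Your treatment of property~ii) is more of a sketch than a proof, and this is where the genuine bookkeeping lies. You should be aware that, since $\sigma=s_j\sigma'\in S_p$ forces $0\le j\le p-1$, the case $i=p$ only meets $i\in\{j,j+1\}$ when $j=p-1$; checking $v_{\sigma,d_p\sigma}=1$ there requires using $v_{s_{p-1}\sigma'}=v_{\sigma'}\circ\eta^{p-1}$ together with the identification $d_p s_{p-1}\sigma'=\sigma'$, and one must be careful that the $\eta$ in play is the one on $\Delta^{p-1}$, not on $\Delta^p$. The remaining cases $i<p$ are immediate since both sides are $1$. None of this is difficult, but it is where the ``obstacle'' you anticipate actually sits, and it deserves to be written out rather than asserted.
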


\begin{proposition}
\label{6.10}

Given a set of transition functions $v_{\sigma,\tau}$ satisfying i) and ii) in Proposition \ref{6.6}, there is a bundle $F$ over $|S|$ and trivializations with
transition functions $v_{\sigma,\tau}$.

\end{proposition}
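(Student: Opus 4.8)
The plan is to build the bundle $F$ over $|S|$ by the standard clutching/gluing construction, using the simplices of $S$ together with the prescribed transition functions $v_{\sigma,\tau}$ as the transition data. For each $p$ and each $\sigma\in S_p$ one takes the trivial bundle $F_\sigma^0=\Delta^p\times\sigma\times G$ over $\Delta^p\times\sigma$, and then one forms the disjoint union $\bigsqcup_{p}\bigsqcup_{\sigma\in S_p}F_\sigma^0$ and quotients by the equivalence relation generated by the face identifications: over $\Delta^{p-1}\times d_i\sigma\subseteq\Delta^p\times\sigma$ one glues $(t,d_i\sigma,g)\in F^0_{d_i\sigma}$ to $(\varepsilon^i t,\sigma, v_{\sigma,d_i\sigma}(t)\cdot g)\in F^0_\sigma$, and more generally for an arbitrary face $\tau$ of $\sigma$ one uses $v_{\sigma,\tau}$. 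The maps $\bar\varepsilon^i$, $\bar\eta^j$ of Definition \ref{6.2} are then the ones induced by these identifications, with trivializations $\varphi_\sigma$ coming tautologically from the chosen $F^0_\sigma$.

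The key steps, in order, are: (i) check that the relation is well defined, i.e. that the gluing maps are mutually compatible — this is exactly where condition i) (the cocycle condition $v_{\sigma,\tau}=(v_{\sigma,\gamma}\circ\varepsilon^i)\cdot v_{\gamma,\tau}$) is used, since a face $\tau$ of $\sigma$ can be reached by different chains of elementary face maps and the two resulting identifications must agree; (ii) check compatibility with degeneracies, which is where condition ii) of Proposition \ref{6.6} enters: when $\sigma=s_j\sigma'$, the value $v_{\sigma,\tau}=1$ for $\tau=\sigma'$ ($i=j,j+1$) guarantees the identification over the degenerate part is the identity, so the bundle descends consistently to $|S|$ with its degeneracy identifications; (iii) verify that the resulting space $F\to|S|$ is locally trivial with structure group $G$ — over each open simplex $\overset{\circ}{\sigma}\times\Delta^p$ this is clear, and across faces local triviality follows because the $v_{\sigma,\tau}$ are continuous $G$-valued maps; (iv) observe that by construction the transition functions of $F$ relative to the trivializations $\varphi_\sigma$ are precisely the given $v_{\sigma,\tau}$, and that these trivializations are admissible in the sense of Definition \ref{6.3}, again by condition ii).

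The main obstacle is step (i)–(ii): making the bookkeeping of the equivalence relation precise. One must show that the relation generated by the elementary ($d_i$, $s_j$) identifications, when restricted to a general pair (face $\tau$, simplex $\sigma$), reproduces exactly the map $v_{\sigma,\tau}$ and that no further unwanted identifications are forced; the simplicial identities of Definition \ref{2.2} together with i) and ii) of Proposition \ref{6.6} are exactly what is needed, and Proposition \ref{6.6} iii) gives the explicit product formula that confirms consistency along composite faces. Concretely one argues by induction on $p-\dim\tau$, reducing a general face identification to a composite of codimension-one ones and invoking the cocycle relation at each stage. Once the relation is shown consistent, everything else — local triviality, the structure of Definition \ref{6.2}, and the identification of the transition functions with the $v_{\sigma,\tau}$ — is routine. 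Since the paper states "the proofs are straightforward" and refers to Akyar \cite{B}, I would present this as a sketch with the cocycle verification highlighted as the only substantive point. $\hfill\Box$
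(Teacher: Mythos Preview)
Your proposal is correct and is exactly the standard clutching construction one would expect; the paper itself does not give a proof of this proposition but simply declares ``The proofs are straight forward. For details see Akyar \cite{B}.'' Your sketch supplies precisely the argument the paper omits, and you have correctly identified the cocycle condition i) and the degeneracy condition ii) of Proposition~\ref{6.6} as the points where the consistency of the gluing must be checked.
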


\begin{corollary}
\label{6.11}

Given a set of functions $v_{\sigma}$ satisfying the compatibility conditions in Proposition \ref{6.7}, one can construct a
bundle $F$ over $|S|$ and the trivializations with the transition functions $v_{\sigma,d_p\sigma}=v_{\sigma}$ and
$v_{\sigma,d_i\sigma}=1$ when $i<p$ and $v_{s_i\sigma}=v_{\sigma}\circ{\eta^i}$ for a degenerate $\sigma$.

\end{corollary}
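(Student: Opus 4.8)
The plan is to obtain Corollary~\ref{6.11} as a direct consequence of Propositions~\ref{6.9} and \ref{6.10}, with the degeneracy compatibility handled by pulling the construction through the admissible-trivialization framework of Definition~\ref{6.3} and Lemma~\ref{6.4}. Concretely, I would start from a set of functions $v_\sigma:\Delta^{p-1}\to G$ for nondegenerate $\sigma\in S_p$ satisfying the compatibility conditions of Proposition~\ref{6.7}(3) together with $v_{s_j\sigma}=v_\sigma\circ\eta^j$ on degenerate simplices. Proposition~\ref{6.9} then upgrades this data to a full collection of transition functions $v_{\sigma,\tau}$ for every face $\tau\preccurlyeq\sigma$, satisfying the cocycle condition i) and the degeneracy relations ii) of Proposition~\ref{6.6}, and normalized so that $v_{\sigma,d_p\sigma}=v_\sigma$ while $v_{\sigma,d_i\sigma}=1$ for $i<p$.

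Next I would feed this collection $\{v_{\sigma,\tau}\}$ into Proposition~\ref{6.10}: since the $v_{\sigma,\tau}$ satisfy i) and ii) of Proposition~\ref{6.6}, that proposition produces a $G$-bundle $F\to|S|$ together with a choice of trivializations $\varphi_\sigma$ whose associated transition functions (in the sense of Definition~\ref{6.5}) are precisely the given $v_{\sigma,\tau}$. In particular the bundle $F$ carries the data $v_{\sigma,d_p\sigma}=v_\sigma$ and $v_{\sigma,d_i\sigma}=1$ for $i<p$ by the normalization coming out of Proposition~\ref{6.9}. The only remaining point is the statement $v_{s_i\sigma}=v_\sigma\circ\eta^i$ for degenerate simplices; but this is exactly the degenerate-simplex clause already built into the hypotheses via Proposition~\ref{6.7}(4)/Proposition~\ref{6.9}, and it is preserved because the trivialization of a degenerate simplex $\sigma=s_i\tau$ is, by Definition~\ref{6.2} and the admissibility requirement of Definition~\ref{6.3}, the pullback ${s^i}^*(\varphi_\tau)$, so its transition functions are the $\eta^i$-pullbacks of those of $\tau$.

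Thus the proof is essentially a chaining argument: Proposition~\ref{6.7}-compatible $v_\sigma$ $\Rightarrow$ (Proposition~\ref{6.9}) transition functions $v_{\sigma,\tau}$ satisfying i), ii) $\Rightarrow$ (Proposition~\ref{6.10}) a bundle $F$ over $|S|$ realizing them, and then one reads off the asserted normalizations. I would write this out in a few lines, invoking the three earlier results by number and checking only that the normalization $v_{\sigma,d_p\sigma}=v_\sigma$, $v_{\sigma,d_i\sigma}=1$ $(i<p)$ survives composition — which it does by construction — and that the degeneracy identity is automatic.

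The main obstacle, such as it is, is bookkeeping rather than substance: one must make sure that the particular admissible trivializations produced by Proposition~\ref{6.10} are compatible with the ones implicitly fixed when Proposition~\ref{6.9} normalized $v_{\sigma,d_i\sigma}=1$ for $i<p$, i.e. that no conflicting gauge choices are made in the two steps. Since Proposition~\ref{6.10} allows one to \emph{choose} the trivializations realizing a prescribed cocycle, and Lemma~\ref{6.4} guarantees admissible ones exist, this compatibility can always be arranged; I would just remark on it briefly rather than re-derive it.
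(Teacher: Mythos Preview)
Your proposal is correct and matches the paper's approach: the paper states Corollary~\ref{6.11} without proof, treating it as the immediate concatenation of Proposition~\ref{6.9} (upgrading the $v_\sigma$ to a full cocycle $\{v_{\sigma,\tau}\}$ with the stated normalization) and Proposition~\ref{6.10} (realizing that cocycle by a bundle with trivializations). Your bookkeeping remark about admissible trivializations is a reasonable elaboration, but the paper does not spell it out either; the intended argument is exactly the two-line chaining you describe.
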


\begin{definition}
\label{6.12}
A set of functions $\{v_{\sigma}\}_{\sigma\in{S}}$ as in Proposition \ref{6.9} are called a set of ``compatible transition functions''.
\end{definition}

We end this section by comparing these compatible transition
functions with the ``parallel transport functions'' (p.t.f.) of
Phillips and Stone \cite{PS1}. For $S=K^s$ these consist of a set of maps,
$V_\sigma:c_\sigma\to{G}$ for each $r$-simplex $\sigma$ of $K$,
$r\ge{1}$, $c_\sigma$ is the $(r-1)$-cube given by
$0\le{s_{a_1}}\le{1},...,0\le{s_{a_{r-1}}}\le{1}$, where
$\sigma=<a_0,...,a_r>\in{K}$ with the compatibility conditions

1. Cocycle condition
\begin{displaymath}
V_\sigma(s_{a_1},...,
s_{a_p}=1,
...,s_{a_{r-1}})=
V_{<a_0,...,a_p>}     (s_{a_1},...,s_{a_{p-1}}                                      ).
V_{<a_p,...,a_r>})(s_{a_{p+1}},...,s_{a_{r-1}}).
\end{displaymath}

2. Compatibility condition

\begin{displaymath}
V_\sigma(s_{a_1},..., s_{a_p}=0 ,...,s_{a_{r-1}})=
V_{<a_0,...,{\hat{a}}_p,...,a_{r-1}>}
(s_{a_1},...,{\hat{s}}_{a_p},...,s_{a_{r-1}}).
\end{displaymath}

Now, suppose we have compatible transition functions $\{v_\sigma\}$
for a principal $G$-bundle $E\to{|K|}$ with triangulated base. Then
for $\sigma=<a_0,...,a_r>$, the p.t.f. $V_\sigma:c_\sigma\to{G}$ is given
by the parallel transport $E_{a_0}\to{E_{a_r}}$ along paths determined as follows:

Let $\sigma=<a_0,...,a_r>\in{K^s}$ and $s=(s_{a_0},...,s_{a_{r-1}})\in{c_\sigma}$.

We pick $r-1$-points as $P_1,...,P_{r-1}$ so that $P_1$ is on the line segment from $a_0$ to $a_1$, that is,
\begin{displaymath}
P_1=(1-s_{a_1})a_0+s_{a_1}a_1=((1-s_{a_1},s_{a_1}),<a_0,a_1>)\in{|K|}.
\end{displaymath}
Similarly, $P_2$ is on the line segment from $P_1$ to $a_2$,
$P_2=(1-s_{a_2})P_1+s_{a_2}a_2$. Then
\begin{displaymath}
P_2=((1-s_{a_2})(1-s_{a_1}),(1-s_{a_2})s_{a_1},s_{a_2},<a_0,a_1,a_2>).
\end{displaymath}
By continuing in the same way, we get
\begin{displaymath}
P_{r-1}=(1-s_{a_{r-1}})P_{r-2}+s_{a_{r-1}}a_{r-1}.
\end{displaymath}
Let $\alpha$ be the piecewise linear path from $a_0$ through $P_1,...,P_{r-1}$ to $a_r$. In other words, $\alpha$
is determined uniquely up to parametrization by $r-1$ numbers $s_{a_1},...,s_{a_{r-1}}$.
For $P_{r-1}=(t,d_r\sigma)\in{\Delta^{r-1}}\times{K_{r-1}}$, $d_r\sigma=<a_1,...,a_{r-1}>$, the element\begin{displaymath}
V_{\sigma}(s_1,...,s_{r-1})=v_{\sigma}(t)\in{G}
\end{displaymath}
is to be interpreted as the parallel transport along $\alpha$.

\begin{figure}[htbp]
  \includegraphics[width=5cm]{figs.2}\\
\end{figure}

\newpage

\medbreak
\section{\bf{The Classifying Map}\label{eight}}

{\bf {The construction of The Classifying Map}}

For a given set of compatible transition functions (c.t.f.) $\{v_\sigma\}$
satisfying Proposition \ref{6.9} we have seen in Proposition
\ref{6.10} that there is an associated $G$- bundle $F$ over $|S_.|$.
Recall that the composite map $\text{proj}\circ{L}:\|\text{
}|\bar{P}_.S|\text{   }\|\to{\|\text{   }|S_.|\text{   }\|}\to{|S|}$
is a homotopy equivalence, where $L=\Lambda\circ{f}$ is given as in Proposition 4.2. In this
section, we construct a classifying map for the bundle
$(\text{proj}\circ{L})^*F$ over $\|\text{
}|{\bar{P}}_.S_.|\text{ }\|$.

\begin{theorem}
  \label{7.4}
{\bf 1)} For given c.t.f.'s
$\{v_\sigma\}$, there is a canonical prismatic map \linebreak $m:{\bar{P}}_.S_.\to{P_.\text{BG}}$.

{\bf 2)} The induced map of geometric realizations
\begin{displaymath}
ev\circ{\|\text{   }|m|\text{   }\|}=\bar{m}:\|\text{
}|\bar{P}_.S|\text{ }\|\stackrel{\|\text{ }|m|\text{
 }\|}\to{\|\text{ }|P_.BG|\text{ }\|}\stackrel{ev}\to{BG}
\end{displaymath}
is a classifying map for the $G$-bundle $(\text{proj}\circ{L})^*F$ over $\|\text{   }|{\bar{P}}_.S_.|\text{   }\|$.
\end{theorem}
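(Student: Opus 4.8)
The plan is to construct the prismatic map $m$ fiberwise-in-$p$ and then verify compatibility with the face operators $d_{(i)}$ (and, where it makes sense, the fibre operators $d_j^{(i)}, s_j^{(i)}$), and finally identify the resulting classifying map with the one classifying $(\mathrm{proj}\circ L)^*F$. First I would unravel what data a $p,q_0,\dots,q_p$-simplex of $P_.\mathrm{BG}$ amounts to: by definition it is the class of a map $a\colon \Delta^p\times\Delta^{q_0\dots q_p}\to G^{p+1}$ with $a_j(\veps^i t,s)$ independent of $s^i$ for $j\neq i$, modulo the $S_pG$-action. So to define $m$ on $\bar P_pS_{q_0,\dots,q_p}=S_{q_0+\dots+q_p+2p+1}$, I must, given $\gamma\in S_{q+2p+1}$, write down such an $a=a(\gamma)$. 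The natural source of maps $\Delta^{\bullet}\to G$ is precisely the compatible transition functions $v_\sigma$: using the $(p+1,q+p+1)$-partition $(i_1,\dots,i_{p+1},j_1,\dots,j_{q+p+1})$ of $n=q+2p+1$ from the proof of Theorem~\ref{4.4}, one extracts from $\gamma$ the faces $\sigma=\mu_{q_0,\dots,q_p}(\bar\sigma)$, $\tau$, $\bar\sigma$ as there, and more generally the iterated faces $\tilde d^{\,\cdot}\sigma$ whose $v$'s appear in Proposition~\ref{6.7}(2)–(3). The components $a_k(t,s^0,\dots,s^k)$ should then be built as the appropriate ordered products of the functions $v_{\tilde d^{\,\cdot}\sigma}$ precomposed with the affine maps $\Delta^p\times\Delta^{q_0\dots q_p}\to\Delta^{\,\cdot}$ dictated by the barycentric-coordinate formula $\lambda(t,s^0,\dots,s^p,\cdot)=(t_0s^0,\dots,t_ps^p,\cdot)$ of \eqref{3.1}. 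Concretely, $a_0$ uses $v$ of the leading sub-simplex governed by $(t,s^0)$, and $a_k$ adjusts by the transition function relating the $k$-skeleton to the $(k-1)$-skeleton; the independence-of-$s^i$ constraint is exactly the content of the compatibility condition $v_\sigma\circ\veps^i$ in Proposition~\ref{6.7}(3).

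Once $a(\gamma)$ is written down, I would check three things. First, well-definedness: different lifts/degeneracy choices change $a$ by the $S_pG$-action, so the class in $P_pBG_{q_0,\dots,q_p}$ is unambiguous — here the $s_j$-compatibility $v_{s_j\sigma}=v_\sigma\circ\eta^j$ of Proposition~\ref{6.7}(4) is what is needed, and it dovetails with the degeneracies $s_j^{(i)}$ on $\bar P_.S$. Second, commutation of $m$ with the fibre face/degeneracy operators $d_j^{(i)}, s_j^{(i)}$: these act on $\gamma\in S_{q+2p+1}$ by certain $d$'s and $s$'s of $S$ (the explicit indices are in Definition~\ref{3.7}), and they act on $\Delta^{q_0\dots q_p}$ by the coordinate face/degeneracy maps $\veps_j^i,\eta_j^i$; matching them is a bookkeeping exercise using Proposition~\ref{6.6}. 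Third — and this is the real point — commutation with the external face operator $d_{(i)}\colon \bar P_pS_{q_0,\dots,q_p}\to\bar P_{p-1}S_{q_0,\dots,\hat q_i,\dots,q_p}$, which on the $BG$ side must correspond to the $i$-th face of the bar construction $P_.BG$ (deleting a $G$-factor, i.e. multiplying two consecutive transition functions). That $d_{(i)}$ deletes the $v$-data indexed by block $i$ and merges the products on its two sides is precisely the cocycle condition of Proposition~\ref{6.6}(i); this is where the choice of ordered products in the definition of $a_k$ has to be made correctly, and I expect this to be the main obstacle — essentially an inductive verification that the Phillips–Stone cocycle identities assemble into a simplicial identity for the bar construction.

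For part 2, having $m$, the realization $\|\,|m|\,\|$ is automatic and $ev$ is a homotopy equivalence by Proposition~\ref{5.1}, so $\bar m=ev\circ\|\,|m|\,\|$ lands in $BG$. To see it classifies $(\mathrm{proj}\circ L)^*F$, I would exhibit an explicit $G$-bundle map from $(\mathrm{proj}\circ L)^*F$ to $\bar m^*EG$ covering $\bar m$. The trivializations of $F$ over each $\Delta^p\times\sigma$ (Proposition~\ref{6.8}) pull back along $L=\Lambda\circ f$ and along $\lambda$ to trivializations over the prismatic cells $\Delta^p\times\Delta^{q_0\dots q_p}\times\{\gamma\}$, and by construction their transition data are exactly the components $a_k(\gamma)$, which are also the transition data of $\bar m^*EG$ (the universal bundle over $\|\,|P_.BG|\,\|$ pulled back along $ev$). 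Hence the two bundles have the same cocycle, so they are isomorphic; since $(\mathrm{proj}\circ L)$ is a homotopy equivalence by Corollary~\ref{composition} and Proposition~\ref{3.9}, $\bar m$ is a genuine classifying map. The only delicate point in this last step is checking that the pulled-back trivialization is admissible in the sense of Definition~\ref{6.3}, so that Proposition~\ref{6.7} applies verbatim; this again reduces to the degeneracy compatibility already used above.
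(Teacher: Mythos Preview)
Your overall strategy matches the paper's: build the components $a_i$ from transition functions precomposed with affine projections derived from $\lambda$, verify the prismatic identities via the compatibility and cocycle conditions of Propositions~\ref{6.6}--\ref{6.7}, and for part~2 compare trivializations. One point in your recipe needs adjustment. You propose to pass through the $(p+1,q+p+1)$-partition of Theorem~\ref{4.4}, extract $\sigma=\mu_{q_0,\dots,q_p}(\bar\sigma)$, $\tau$, $\bar\sigma$ from $\gamma\in S_{q+2p+1}$, and then use the $v$'s of the iterated faces $\tilde d^{\,\cdot}\sigma$. But this $\sigma$ is highly degenerate, so by Proposition~\ref{6.7}(4) its transition functions collapse and carry essentially no information about the bundle. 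The paper does not go through the partition at all; it works directly with the full element of $S_{q+2p+1}$ (which it also, confusingly, calls $\sigma$; call it $\gamma$ here) and its block faces $\tilde d^{(k)}\gamma=d_{(p-k+1)}\cdots d_{(p)}\gamma$, setting $a_p\equiv 1$ and
\[
a_i(t,s^0,\dots,s^i)\;=\;v_{\gamma,\,\tilde d^{(p-i)}\gamma}\bigl(\rho^{(i+1)}\lambda(t)(s)\bigr)^{-1},
\]
where $\rho^{(i+1)}$ is the degeneracy deleting the last $q_{i+1}+\dots+q_p+2(p-i)$ interior coordinates. These composite $v_{\gamma,\tau}$ are precisely the ordered products of basic $v$'s you had in mind, via Proposition~\ref{6.7}(2), so your instinct was right --- only the indexing simplex should be $\gamma$, not the extracted $\sigma$. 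With the normalization $a_p=1$ the independence-of-$s^i$ check becomes an inspection of $\rho^{(i)}\lambda(\varepsilon^it)(s)$, and for part~2 the paper reads off the bundle-map components directly from the trivializations $\varphi_\gamma$, exactly as you outline.
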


{\bf Proof :}

{\bf 1)} The map $m:{\bar{P}}_.S_.\to{P_.BG}$ is defined as
\begin{displaymath}
m(\sigma)=[(a_0,a_1,...,a_p)]
\end{displaymath}
where $\sigma\in{{\bar{P}}_pS_{q_0...q_p}}=S_{q+2p+1}$, $q=q_0+\dots+q_p$ and
$a_i:\Delta^p\times{\Delta^{q_0...q_i}}\to{G}$ are given below.
In the following, we use for convenience the interior coordinates $(t_1,\dots,t_p)$ of the standard simplex with 
barycentric coordinates $(t_0^\prime,\dots,t_p^\prime)$.
\begin{displaymath}
t_1=1-t_0^\prime,\text{   }t_2=1-t_0^\prime-t_1^\prime,\text{   }\dots\text{   }t_{r-1}=1-t_0^\prime-\dots-t_{r-1}^\prime,\text{   }t_r=t_r^\prime
\end{displaymath}
such that $0\leq{t_i}\leq{1}$, $i=1,\dots,p$, $1\geq{t_1}\geq{\dots}\geq{t_p}\geq{0}$ and $\sum_{i=0}^pt_i^\prime=1$, $t_i^\prime\leq{1}$, $i=0,\dots,p$.

In these terms the map $\Lambda$ from Section 3 is induced by the maps \linebreak
$\lambda_p:\Delta^p\times{\Delta^{q_0...q_p}}\to{\Delta^{q+2p+1}}$ given by

\begin{eqnarray*}
\lambda_p(t,s^0,0,...,0,s^p,0)&=&(s_1^0(1-t_1)+t_1,\dots,s_{q_0}^0(1-t_1)+t_1,t_1,t_1,\\
& &s_1^1(t_1-t_2)+t_2,\dots,s_{q_1}^1(t_1-t_2)+t_2,t_2,t_2,\\
& &...,\\
& &s_1^{p-1}(t_{p-1}-t_p)+t_p,\dots,s_{q_{p-1}}^{p-1}(t_{p-1}-t_p)+t_p,t_p,t_p,\\
& &s_1^pt_p,...,s_{q_p}^pt_p,0).
\end{eqnarray*}
For convenience, we drop $p$ in $\lambda_p(t)(s)$ and write
$\lambda(t)(s)$. Next, let  \linebreak $\rho^{(i)}:\Delta^{q+2p+1}\to{\Delta^{q_0+\dots+q_{i-1}+2i-1}}$ be the 
degeneracy map for $i=1,\dots,p$ defined by 
\begin{displaymath}
\rho^{(i)}:=\eta^{q_0+\dots+q_{i-1}+2i-1}\circ{\dots}\circ{\eta^{q+2p}}
\end{displaymath}
deleting the last $q_i+\dots+q_p+2(p-i+1)$ coordinates. 
So e.g.
\begin{eqnarray*}
\rho^{(p)}\lambda(t)(s)&=&(s_1^0(1-t_1)+t_1,\dots,s_{q_0}^0(1-t_1)+t_1,t_1,t_1,\\
& &s_1^1(t_1-t_2)+t_2,\dots,s_{q_1}^1(t_1-t_2)+t_2,t_2,t_2,\\
& &...,\\
& &s_1^{p-1}(t_{p-1}-t_p)+t_p,\dots,s_{q_{p-1}}^{p-1}(t_{p-1}-t_p)+t_p,t_p),
\end{eqnarray*}
where $\rho^{(p)}:=\eta^{q-q_p+2p-1}\circ{...}\circ{\eta^{q+2p}}$ is
deleting the last $q_p+2$ coordinates.
With this notation, the maps $a_i:\Delta^p\times{\Delta^{q_0...q_i}}\to{G}$
defining the classifying map $m(\sigma)$ are given by
\begin{eqnarray*}
a_p(t,s^0,0,...,s^p,0)&=&1\\
a_{p-1}(t,s^0,0,...,s^{p-1},0)&=&v_{\sigma,d_{(p)}\sigma}(\rho^{(p)} (\lambda(t)(s))){}^{-1},\\
a_{p-2}(t,s^0,0,...,s^{p-2},0)&=&v_{\sigma,{\tilde{d}}^{(2)}\sigma}(\rho^{(p-1)} (\lambda(t)(s))){}^{-1}\\
&.&\\
&.&\\
&.&\\
a_1(t,s^0,0,s^1,0)&=&v_{\sigma,d_{(2)...(p)}\sigma}(\rho^{(2)} (\lambda(t)(s))){}^{-1}\\
a_0(t,s^0,0)&=&v_{\sigma,d_{(1)...(p)}\sigma}(\rho^{(1)} (\lambda(t)(s))){}^{-1}.
\end{eqnarray*}
Here the boundary operators used above are given as follows:
\begin{displaymath}
d_{(p)}:S_{q+2p+1}\to{S_{q+2p-q_p-1}}
\end{displaymath}
is defined by $d_{(p)}:=d_{q+2p-q_p}\circ{...}\circ{d_{q+2p+1}}$,
deleting $q_p+2$ elements. On the other hand, in the formula
${\tilde{d}}^{(1)}={\hat{d}}_{(p)}=d_{(p)}$. Let's denote
\begin{displaymath}
{\tilde{d}}^{(p-i)}={\hat{d}}_{(i+1)}\circ{...}\circ{{\hat{d}}_{(p)}}
\end{displaymath}
$i=0,...,p-1$, which deletes the elements
$(q_0+...+q_i+2i-1,...,q+2p+1)$. It deletes \linebreak
$q_{i+1}+...+q_p+2(p-i)=q-(q_0+...+q_i)+2(p-i)$ elements. Here
\begin{displaymath}
{\hat{d}}_{(i)}:S_{q+2i+1-\sum_0^{p-i-1}q_{p-j}}\to{S_{q+2i-1-\sum_0^{p-i}q_{p-j}}}
\end{displaymath}
$i=1,...,p$. By using the equivalence relations on $m$ we can see
that $m(d_{(i)}\sigma)$ is independent of $s^i$ for all $j$
different from $i$. Take $t_0^\prime=0$ then
$v_{\sigma,{\tilde{d}}^{(p)}\sigma}(1,...,...,1,1,1)$ does not
depend on $s^0$ where $j=1\neq{0}=i$.

{\bf 2)} For given c.t.f.'s $v_{\sigma}$,
we now have the map of realizations \linebreak
$\|\text{   }|m|\text{   }\|:\|\text{   }|{\bar{P}}_.S_.|\text{   }\|\to{\|\text{   }|P_.BG|\text{   }\|}$ given by
\begin{eqnarray*}
\|\text{   }|m|\text{   }\|(t,s,\sigma)=(t,s,[(a_0,\dots,a_p)]).
\end{eqnarray*}

The associated bundle map is given as follows:

We have a bundle $F$ on $|S|$ by Proposition \ref{6.10} and
$|{\bar{P}}_pS_.|\to{|S_.|}$ is an epimorphism, so by pulling back
we get a bundle $\bar{F}\to{|{\bar{P}}_pS_.|}$, i.e.,
\begin{displaymath}
\xymatrix{
\bar{F} \ar[d] \ar[r]
&F   \ar[d] \\
|{\bar{P}}_.S_.| \ar[r]
& |S_.|
}
\end{displaymath}
Transition functions used to define the classifying
map $\tilde{m}$ are taken from the bundle $F\to{|S|}$.  Let's take
$\sigma\in{S_{q+2p+1}}$ and there is a fibre at
$(\lambda(t,s^0,0\dots,s^p,0),\sigma)$, by using the trivialization
$\varphi_\sigma:F_\sigma\to{{\Delta}^{q+2p+1}\times{\sigma}\times{G}}$
and the projection on the last factor, we get $F_\sigma\to{G}$.
Let's denote this composition by ${\bar{\varphi}}_\sigma(\tilde{f})$
where $\tilde{f}:=(\lambda(t,s^0,0,\dots,s^p,0),\sigma)$,
$\tilde{f}_\sigma\in{F_{(\lambda(t,s^0,0,\dots,s^p,0),\sigma)}}$,
$\sigma\in{S_{q+2p+1}}$. On the other hand
\begin{displaymath}
{\varphi}_{d_{(p)}\sigma}:F_{d_{(p)}\sigma}\to{{\Delta}^{q+2p-q_p-1}}\times{d_{(p)}\sigma}\times{G}
\end{displaymath}
gives us
\begin{displaymath}
{\bar{\varphi}}_{d_{(p)}\sigma}:F_{d_{(p)}\sigma}\to{G}.
\end{displaymath}

By the definition,
\begin{displaymath}
{\bar{\varphi}}_\sigma({\bar{d}}^{(p)}\tilde{f}_\sigma):=v_{\sigma,d_{(p)}\sigma}(\rho^{(p)}\lambda(t,s^0,0,\dots,s^p,0)).{\bar{\varphi}}_{d_{(p)}\sigma}{(\tilde{f})}_{d_{(p)\sigma}},
\end{displaymath}
where the compatible transition function is
\begin{displaymath}
v_{\sigma,d_{(p)}\sigma}:{\Delta}^{q+2p-q_p-1}\to{G}.
\end{displaymath}

The last component in $\|\text{   }|m|\text{   }\|(t,s,{\tilde{f}}_\sigma)$ is defined via the trivialization $\varphi_\sigma(\tilde{f})$ which is ${\bar{\varphi}}_\sigma(\tilde{f})$. By using the compatible transition function $v_{\sigma,d_{(p)}\sigma}$ we find the $p$-th component as
\begin{displaymath}
{v_{\sigma,d_{(p)}\sigma}(\rho^{(p)} \lambda(t,s^0,0,\dots,s^p,0))}^{-1}.{\bar{\varphi}}_\sigma(\tilde{f}).
\end{displaymath}
We can apply the same method several times to get the other coordinates in $\|\text{   }|m|\text{   }\|(t,s,{\tilde{f}}_\sigma)$.

By the definition $PEG/SG=PBG$, $PEG=\|N\bar{G}\|$ and $\gamma:N\bar{G}\to{NG}$ we can identify $PBG=\|NG\|$. Then the required map $\bar{m}$ is
\begin{displaymath}
\bar{m}(t,s,\sigma)=[(a_0,...,a_p)].
\end{displaymath}

$\hfill \Box$

In particular for a simplicial complex $K$ we get the following (c. f. \cite{PS1})

\begin{corollary}(Phillips-Stone)
{\bf 1)} A set of compatible transition functions $\{v_\sigma\}$ for $K$ a simplicial complex there is a natural prismatic map
\begin{displaymath}
P_.\St(K^s)\to{PBG}.
\end{displaymath}

{\bf 2)} The induced map on geometric realization gives a classifying map for the bundle $F$ pulled back to $|\St(K)|\subseteq{|K|\times{|K|}}$.

\end{corollary}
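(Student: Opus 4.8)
The plan is to specialize Theorem \ref{7.4} to $S = K^s$ and transport the conclusion along the isomorphisms already available for simplicial complexes.

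For part 1), note first that for $K$ a simplicial complex the compatible transition functions of Definition \ref{6.12} are exactly the parallel transport functions of Phillips--Stone, as recalled at the end of Section 7. Hence Theorem \ref{7.4}(1), applied to the simplicial set $S = K^s$, yields a canonical prismatic map $m : \bar P_.(K^s) \to P_.BG$. By Theorem \ref{4.4}(2) the natural map $\bar p : \bar P_.(K^s) \to P_.\St(K^s)$ is an \emph{isomorphism} of prismatic sets (recall that $P_.\St(S) = P_.(\pi_1)$ for $\pi_1 : \St_.(S) \to S$, as in Remark 2 of Section 4), and Lemma \ref{4.3} identifies $\St(K^s)$ with $\St(K)^s$. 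The composite $m\circ\bar p^{-1} : P_.\St(K^s) \to P_.BG$ is then the asserted natural prismatic map, its naturality being inherited from that of $m$ and $\bar p$.

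For part 2), Theorem \ref{7.4}(2) says that $\bar m = ev\circ\|\,|m|\,\| : \|\,|\bar P_.(K^s)|\,\| \to BG$ classifies the bundle $(\text{proj}\circ L)^*F$, where $L = \Lambda\circ f$ and $\text{proj}\circ L : \|\,|\bar P_.(K^s)|\,\| \to |K^s| = |K|$ is the homotopy equivalence coming from Proposition \ref{3.9} and Corollary \ref{composition}. Since $\bar p$ is a prismatic isomorphism it induces a homeomorphism $g : \|\,|P_.\St(K^s)|\,\| \stackrel{\approx}{\longrightarrow} \|\,|\bar P_.(K^s)|\,\|$, and the map on geometric realizations referred to in the statement is $\bar m\circ g$, which classifies $(\text{proj}\circ L\circ g)^*F$. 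The key claim is the identification
\begin{displaymath}
\text{proj}\circ L\circ g \;=\; |\pi_1|\circ\lambda : \|\,|P_.\St(K^s)|\,\| \longrightarrow |\St(K^s)_.| = |\St(K)| \longrightarrow |K|,
\end{displaymath}
where $\lambda$ is the prismatic triangulation homotopy equivalence of Theorem \ref{3.4} for the simplicial map $\pi_1 : \St_.(K^s) \to K^s$, and $|\pi_1|$ is the restriction to $|\St(K)| \subseteq |K|\times|K|$ of the first projection. Granting this, $\bar m\circ g$ classifies $\lambda^*\bigl(|\pi_1|^*F\bigr)$, that is, the pullback along the homotopy equivalence $\lambda$ of the bundle $|\pi_1|^*F$ obtained by restricting $F$ to $|\St(K)|$; composing with a homotopy inverse of $\lambda$ exhibits $\bar m\circ g$ as a classifying map for $F$ pulled back to $|\St(K)|$, as claimed.

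The main obstacle is verifying the displayed factorization of $\text{proj}\circ L\circ g$. This is a cell-by-cell diagram chase on $\Delta^p\times\Delta^{q_0\dots q_p}\times P_p\St(K^s)_{q_0,\dots,q_p}$: one unwinds the $(p+1,q+p+1)$-partition bookkeeping defining $\bar p$ in the proof of Theorem \ref{4.4}, the maps $f$ and $\Lambda$ of Proposition \ref{3.9} and Section 3, and formula (\ref{3.1}) for $\lambda$, and checks that the index relabelling $S_{q+2p+1}\leftrightarrow P_p\St(S)_{q_0,\dots,q_p}$ intertwines the two ``collapse the prism coordinates'' maps, while $\pi_1$ on $\St$ reads off the first simplex $\sigma = \mu_{q_0,\dots,q_p}\bar\sigma$. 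Once this commutativity is established, the remaining ingredients --- functoriality of the realizations under prismatic maps, the evaluation homotopy equivalences of Proposition \ref{5.1}, and the behaviour of principal $G$-bundles under pullback --- are formal.
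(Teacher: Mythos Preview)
Your approach matches the paper's: transport Theorem \ref{7.4} through the isomorphism $\bar p$ of Theorem \ref{4.4}(2). The paper's own proof is terser than yours---it simply cites the isomorphism for part 1 and then remarks that $\pi_1: P\St(K^s) \to K$ is a homotopy equivalence for part 2, without attempting the detailed factorization you sketch.

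There is, however, a concrete slip in your displayed identity. Unwinding the definitions: for $\gamma \in \bar P_p K^s_{q_0,\dots,q_p}$ with $\bar p(\gamma) = (\sigma, \tau, \bar\sigma)$ as in the proof of Theorem \ref{4.4}, the map $f$ of Proposition \ref{3.9} sends $\gamma$ to $d_{q_0+1}\circ d_{q_0+q_1+3}\circ\cdots\circ d_{q+2p+1}(\gamma)$, and comparing with the formulas there this is precisely $\tau = d_{i_1\dots i_{p+1}}(\gamma)$, the \emph{second} component of $\bar p(\gamma)$, not $\sigma$. Hence
\[
(\text{proj}\circ L\circ g)(t,s,(\sigma,\tau,\bar\sigma)) = (t_0s^0,\dots,t_ps^p,\tau),
\]
whereas $|\pi_1|\circ\lambda$ gives $(t_0s^0,\dots,t_ps^p,\sigma) \sim (t,\bar\sigma)$ in $|K|$; these are not equal in general. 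The identity that actually holds is $\text{proj}\circ L\circ g = |\pi_2|\circ\lambda$. This does not break your argument, since $|\pi_1|$ and $|\pi_2|$ are homotopic on $|\St(K)|$ (both retract onto the diagonal copy of $|K|$) and hence the two pullback bundles are isomorphic; but the cell-by-cell chase you propose would not have established the equality you wrote, and the fix is to either switch to $\pi_2$ or to invoke the homotopy $|\pi_1|\simeq|\pi_2|$ explicitly.
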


{\bf Proof:}
In the second part of Theorem \ref{4.4}, we have showed that \linebreak
$\bar{p}:\bar{P}K^s\to{P\text{St}K^s}$ is an isomorphism. On the other hand in the
previous proposition, we have defined the classifying map $m$. This is also valid when $S=K^s$. So the p.t.f. $v_\sigma$ will determine a natural map
\begin{displaymath}
m:P\text{St}K^s\to{PBG}.
\end{displaymath}

Furthermore $\pi_1:P\text{St}(K^s)\to{K}$ is a homotopy equivalence.

{\bf Remark:}
The point of the corollary is that there is a connection in the prismatic universal bundle in the simplicial sense (see \cite{DLj}) which thus pulls 
back to a connection in the bundle over the star complex. We shall return to this elsewhere.

\newpage

\end{document}